\newcommand{\SL}{\mathrm{SL}}
\newcommand{\Mp}{\mathrm{Mp}}
\newcommand{\N}{\mathbb{N}}
\newcommand{\Z}{\mathbb{Z}}
\newcommand{\R}{\mathbb{R}}
\newcommand{\C}{\mathbb{C}}
\newcommand{\Q}{\mathbb{Q}}
\renewcommand{\H}{\mathcal{H}}
\DeclareMathOperator{\tr}{tr}
\DeclareMathOperator{\sgn}{sgn}
\DeclareMathOperator{\e}{\mathfrak{e}}
\renewcommand{\Im}{\mathrm{Im}}
\numberwithin{equation}{section}
	\newtheorem{Satz}{Satz}[section]
	\newtheorem{theorem}[Satz]{Theorem}
	\newtheorem{lemma}[Satz]{Lemma}
	\newtheorem{proposition}[Satz]{Proposition}
	\theoremstyle{definition} 
	\newtheorem{example}[Satz]{Example}
	\newtheorem{remark}[Satz]{Remark}
\date{\today}
\author{Markus Schwagenscheidt and Brandon Williams}
\address{ETH Z\"urich Mathematics Dept., R\"amistrasse 101, CH-8092 Z\"urich, Switzerland}
\email{mschwagen@ethz.ch}
\address{Lehrstuhl A f\"ur Mathematik, RWTH Aachen University, 52056 Aachen, Germany}
\email{brandon.williams@matha.rwth-aachen.de}
\title{Binary theta functions and Borcherds products}
\begin{document}

\maketitle

\begin{abstract}
	We obtain infinite product expansions in the sense of Borcherds for theta functions associated with certain positive-definite binary quadratic and binary hermitian forms. Among other things, we show that every weight $1$ binary theta function is a Borcherds product. In particular, binary theta functions have zeros only at quadratic irrationalities. 
\end{abstract}

\section{Introduction}
Infinite products appear in many places in number theory and combinatorics, for instance in the Euler product of the Riemann zeta function, the Jacobi triple product, or the generating series for the partition function. A prominent example from the theory of modular forms is Ramanujan's $\Delta$-function
\[
\Delta(\tau) = q\prod_{n=1}^\infty (1-q^n)^{24} \qquad \left(q=e^{2\pi i \tau}\right),
\]
which defines a cusp form of weight $12$ for $\SL_2(\Z)$. The modularity of this and many similar infinite products can be systematically explained (and vastly generalized) using Borcherds' theory of automorphic products \cite{borcherds95, borcherds}. In the simplest case, this theory associates to a weakly holomorphic modular form $f = \sum_{n \gg -\infty}c_f(n)q^n$ of weight $1/2$ for $\Gamma_0(4)$ (with integral coefficients $c_f(n)$ satisfying the Kohnen plus space condition $c_f(n) = 0$ unless $n \equiv 0,1 \pmod 4$) the infinite product
\begin{align}\label{Borcherds product}
\Psi_f(\tau) = q^{\varrho_f} \prod_{n=1}^{\infty}(1-q^n)^{c_f(n^2)},
\end{align}
where $\varrho_f \in \Q$ is the so-called Weyl vector, which can also be computed from $f$. The infinite product converges for $\Im(\tau)$ large enough and extends to a meromorphic modular form of weight $c_f(0)$ for $\SL_2(\Z)$ with some character. Moreover, the roots and poles of $\Psi_f(\tau)$ lie at certain imaginary quadratic irrationalities which are determined by the principal part of $f$. For example, $\Delta(\tau)$ is the Borcherds product associated to the Jacobi theta function $12\theta(\tau) = 12\sum_{n \in \Z}q^{n^2}$, and the fact that $12\theta(\tau)$ has vanishing principal part reflects the fact that $\Delta(\tau)$ does not have any roots (or poles) on the complex upper half-plane $\H$. Similarly, the modular $j$-invariant and the Eisenstein series $E_k$ for $k = 4,6,8,10, 14$ are Borcherds products, see \cite{borcherds95}.

More generally, the multiplicative Borcherds lift is a construction of automorphic products for orthogonal groups of signature $(2,n)$ for any $n \geq 1$, see \cite[Theorem~13.3]{borcherds}. As special cases, this includes elliptic modular forms for congruence subgroups, Hilbert modular forms over real quadratic fields, and Siegel modular forms of genus $2$.

In the present note, we obtain infinite product expansions for the holomorphic theta functions associated with certain positive definite, integral binary quadratic and binary hermitian forms. 
We will discuss weight $1$ theta functions associated with integral binary quadratic forms, weight $2$ theta functions associated with binary hermitian forms over the Gaussian integers $\Z[i]$, and weight $4$ theta functions associated with binary hermitian forms over the Lipschitz integers $\Z[i,j,k]$ (a subring of the quaternions $\mathbb{H}$).

We illustrate our results in the case of binary quadratic forms in the introduction. Let 
\[
A(x,y) = ax^2 + bxy + cy^2, \qquad a,b,c \in \Z,
\]
be a positive definite, integral binary quadratic form of discriminant $D = b^2-4ac < 0$, and let
\[
\vartheta_A(\tau) = \sum_{m,n \in \mathbb{Z}} q^{A(m,n)}
\]
be the corresponding binary theta function. It is a modular form of weight $1$ for $\Gamma_0(|D|)$ and character $\chi_D = \left(\frac{D}{\cdot}\right)$, and it is an eigenform of the Fricke involution with eigenvalue $-i$ (compare \cite[\S~4.9]{miyake}), i.e. $$\vartheta_A\left(-\frac{1}{|D|\tau}\right) = -i \tau \sqrt{|D|} \cdot \vartheta_A(\tau).$$

\begin{theorem}\label{theorem product expansion intro}
	For $\Im(\tau)$ large enough we have the product expansion
	\begin{align}\label{product expansion}
	\vartheta_A(\tau) = \prod_{n=1}^\infty\left(\frac{1+q^n}{1-q^n}\right)^{d_A(n)},
	\end{align}
	with
	\[
	d_A(n) = \!\!\!\!\sum_{\substack{\alpha, \beta, \gamma \in \Z \\ \alpha a + \beta b + \gamma c  = n}}\!\!\!\!(-1)^{(\alpha+1)(\gamma+1)}c_{1/\theta}(\alpha\gamma-\beta^2) \in \Z.
	\]
	Here we wrote $1/\theta(\tau) = \sum_{n\geq 0}c_{1/\theta}(n)q^n = 1 - 2q + 4q^2 - 8q^3+\dots$ with the Jacobi theta function $\theta(\tau) = \sum_{n \in \Z}q^{n^2}$.
\end{theorem}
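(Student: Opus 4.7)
The plan is to realize $\vartheta_A$ as a Borcherds product via \cite[Theorem~13.3]{borcherds} for an appropriate lattice of signature $(2,1)$. The natural choice is the lattice $L$ of symmetric $2\times 2$ integer matrices with quadratic form $Q(\alpha,\beta,\gamma) = \beta^2 - \alpha\gamma$, whose attached symmetric domain is $\H$. The binary form $A$ then corresponds to the vector $\lambda_A = (a,b,c) \in L$: it picks out the CM point $\tau_A = (-b+\sqrt{D})/(2a) \in \H$ and defines a Weyl chamber via the pairing $\langle \lambda,\lambda_A\rangle = \alpha a + \beta b + \gamma c$. The summation condition $\alpha a + \beta b + \gamma c = n$ in the definition of $d_A(n)$ is exactly this Weyl-chamber condition, which is strong evidence that the lattice is correct.

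First I would pin down the divisor of $\vartheta_A$ on $X_0(|D|)$. As a holomorphic weight-$1$ form on $\Gamma_0(|D|)$ with character $\chi_D$ and Fricke eigenvalue $-i$, its zeros are forced to lie at Heegner points of discriminant $D$ whose $\Gamma_0(|D|)$-orbits are determined (via genus theory of binary quadratic forms) by the class of $A$. Then I would construct a weakly holomorphic vector-valued modular form $f$ of weight $1/2$ for the Weil representation of $L$ with $c_f(0) = 1$ (so that $\Psi_f$ has weight $1$) and principal part producing precisely this divisor. The shape of $d_A(n)$ is the guiding clue: the Fourier coefficients $c_f(\mu,m)$ for $\mu \in L'/L$ should equal $(-1)^{\varepsilon(\mu)} c_{1/\theta}(m)$, where $\varepsilon$ is a genus character on the discriminant group giving rise to the sign $(-1)^{(\alpha+1)(\gamma+1)}$.

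With $f$ in hand, Borcherds' theorem produces an infinite product expansion of $\Psi_f$ as a meromorphic modular form on $\Gamma_0(|D|)$ of weight $1$ and character $\chi_D$ whose divisor equals that of $\vartheta_A$. Hence $\vartheta_A/\Psi_f$ is a weight-zero holomorphic form without zeros, i.e.\ a nonzero constant, which is normalized to $1$ by comparing leading Fourier coefficients at the cusp $\infty$. Unfolding the Borcherds product, the exponent attached to the $n$-th factor becomes
\[
\sum_{\substack{(\alpha,\beta,\gamma) \in \Z^3 \\ \alpha a + \beta b + \gamma c = n}}(-1)^{(\alpha+1)(\gamma+1)}c_{1/\theta}(\alpha\gamma-\beta^2) = d_A(n),
\]
and the ratio $(1+q^n)/(1-q^n)$ (rather than a single $(1-q^n)^{\bullet}$ factor) appears because the two cosets in $L'/L$ contributing to each value of $\langle\lambda,\lambda_A\rangle$ pair up with opposite signs under the genus character $\varepsilon$.

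The main obstacle will be the construction of $f$ and verifying that its principal part produces exactly the Heegner divisor of $\vartheta_A$. This is a Shimura-type matching between weight-$1/2$ principal parts and divisors on $X_0(|D|)$, and singling out the precise genus-character twist of $1/\theta$ that gives the correct divisor is where the bulk of the technical work lies; convergence of the product for $\Im(\tau)$ sufficiently large is then automatic from Borcherds' theorem.
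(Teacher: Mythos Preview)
Your strategy is different from the paper's, and the difference matters for this particular statement. The paper does \emph{not} work on a signature $(2,1)$ lattice. Instead it observes (following Hermann) that $\vartheta_A(\tau)$ is the restriction of the genus~$2$ Siegel theta constant $\Theta(Z)$ to the diagonal $Z=\left(\begin{smallmatrix}2a\tau & b\tau\\ b\tau & 2c\tau\end{smallmatrix}\right)$, and then quotes Borcherds' known product expansion of $\Theta(Z)$ on $O(2,3)$, whose input is essentially $1/\theta$. Substituting the diagonal value of $Z$ collapses the $O(2,3)$ product to the stated one; the triple sum over $(\alpha,\beta,\gamma)$ with constraint $\alpha a+\beta b+\gamma c=n$ and argument $\alpha\gamma-\beta^2$ is precisely the fingerprint of restricting a three-variable product indexed by a rank-$3$ lattice $K$ to a one-parameter curve. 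The $(1+q^n)/(1-q^n)$ shape and the sign $(-1)^{(\alpha+1)(\gamma+1)}$ come directly from the known Siegel product, not from any genus-character pairing on a $(2,1)$ discriminant form.

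Your $(2,1)$ approach is in fact what the paper carries out for its \emph{other} product formula (Theorem~\ref{theorem Borcherds products intro}/\ref{theorem Borcherds products}), where $\vartheta_A=\prod_n(1-q^n)^{\tilde c_A(n^2)}$ with exponents given by Fourier coefficients of an explicit weight-$1/2$ form $F_A$ for $\rho_{A_1(|D|)}$. But even there the paper does not determine the divisor of $\vartheta_A$ first and then reverse-engineer $F_A$; it obtains $F_A$ by pushing the $O(2,3)$ input $F_1$ down along the orthogonal splitting $P\oplus N\subset K$ (a theta-contraction), so the $O(2,3)$ picture is still the engine. The gap in your plan is the step you flag yourself: producing, purely within the $(2,1)$ framework, a vector-valued $f$ whose coefficients are exactly $(-1)^{\varepsilon(\mu)}c_{1/\theta}(m)$ and whose principal part matches the Heegner divisor of $\vartheta_A$. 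There is no natural rank-one construction that hands you $c_{1/\theta}$ together with that particular sign; both arise from the level-$4$ structure of the signature $(1,2)$ lattice $K=U(4)\oplus A_1(-1)$ sitting inside the $O(2,3)$ setup, and without that input your proposal remains a wish rather than a proof.
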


\begin{remark}\label{remark finite}
	For fixed $n \in \N$, the number of $\alpha,\beta,\gamma \in \Z$ with $n = \alpha a + \beta b + \gamma c$ with $\alpha \gamma -\beta^2 \geq 0$ is finite. A simple bound is given by $0 \leq \alpha \leq \frac{4n}{|D|}c, ~ 0 \leq \gamma \leq \frac{4n}{|D|}a,$ and  $|\beta| \leq \sqrt{\alpha \gamma}$.
\end{remark}

\begin{example}\label{example root lattice A2}
	We consider the quadratic form $A = [1,1,1]$ of discriminant $D = -3$ and the corresponding theta function
	\[
	\vartheta_{[1,1,1]}= 1 + 6q +6q^3 +6q^4 + 12q^7 + 6q^9 + 6q^{12} + 12q^{13} + \ldots \in M_{1}(\Gamma_0(3),\chi_{-3}).
	\]
	Using Remark~\ref{remark finite} we may compute the exponents $d_{[1,1,1]}(n)$ in Theorem~\ref{theorem product expansion intro}, and obtain the product expansion
	\[
	\vartheta_{[1,1,1]} = \left( \frac{1+q}{1-q}\right)^3\left( \frac{1+q^2}{1-q^2}\right)^{-9}\left( \frac{1+q^3}{1-q^3}\right)^{38}\left( \frac{1+q^4}{1-q^4}\right)^{-177}\left( \frac{1+q^5}{1-q^5}\right)^{867}\cdots
	\]
\end{example}

The proof of Theorem~\ref{theorem product expansion intro} is quite short if we use some well-known results about Siegel modular forms of genus $2$. The key observation, due to C.F.~Hermann \cite{hermann}, is the fact that the binary theta function $\vartheta_A(\tau)$ is the restriction to $\{Z = \big( \begin{smallmatrix}2a\tau & b\tau \\ b\tau & 2c\tau \end{smallmatrix}\big) \, : \, \tau \in \H\}$ of the genus $2$ Siegel theta constant
\[
\Theta(Z) = \sum_{m,n \in \Z}q^{m^2/2} r^{mn}s^{n^2/2},
\]
where we wrote $Z = \left(\begin{smallmatrix}\tau & z \\ z & w\end{smallmatrix}\right) \in \H_2$ (the Siegel upper half-space of genus $2$) and $q = e^{2\pi i \tau}, r = e^{2\pi i z}, s = e^{2\pi i w}$. It is known that $\Theta(Z)$ is a weight $1/2$ Siegel modular form of genus $2$, and has an infinite product expansion. More precisely, Borcherds \cite[Example~5 in Section~15]{borcherds95} showed that\footnote{There is a sign error in \cite{borcherds95} that we have corrected here.}
\[
\Theta(Z)  = \prod_{\substack{\alpha,\beta,\gamma \in \Z \\ \alpha+\gamma > 0}}\left(\frac{1+(-1)^{(\alpha+1)(\gamma+1)}q^{\alpha/2} r^{\beta} s^{\gamma/2}}{1-(-1)^{(\alpha+1)(\gamma+1)}q^{\alpha/2} r^{\beta} s^{\gamma/2}}\right)^{c_{1/\theta}(\alpha \gamma -\beta^2)}.
\]
The condition $\alpha + \gamma > 0$ may be replaced by $\alpha a + \beta b + \gamma c > 0$, see \cite[Lemma~3.2]{bruinierhabil}. Plugging in $Z = \left( \begin{smallmatrix}2a\tau & b\tau \\ b\tau & 2c\tau \end{smallmatrix}\right)$, we obtain the product expansion for $\vartheta_A(\tau)$ as stated in Theorem~\ref{theorem product expansion intro}. It would be interesting to find a more direct (e.g. combinatorial) proof of the product expansion for $\vartheta_A(\tau)$, without the detour via Siegel modular forms.
 
By computing the Borcherds product expansion of the Siegel theta constant $\Theta(Z)$ at a different cusp, we obtain another product formula for $\vartheta_A(\tau)$, more reminiscent of the infinite product in \eqref{Borcherds product}.

\begin{theorem}\label{theorem Borcherds products intro}
	Suppose that $A$ has odd fundamental discriminant $D < 0$. Then the binary theta function $\vartheta_A(\tau)$ is a Borcherds product associated with a weakly holomorphic modular form $F_A = \sum_{n \gg -\infty}c_A(n)q^n$ of weight $1/2$ for $\Gamma_0(4|D|)$. In particular, for $\Im(\tau)$ large enough we have an infinite product expansion 
	\[
	\vartheta_A(\tau) = \prod_{n=1}^\infty (1-q^n)^{\tilde{c}_A(n^2)},
	\]
	where $\tilde{c}_A(n) = c_A(n)$ if $N \mid n$ and $\tilde{c}_A(n) = c_A(n)/2$ if $N \nmid n$. The coefficients $\tilde{c}_A(n)$ are integers.
\end{theorem}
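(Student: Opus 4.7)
The strategy is to revisit the Hermann embedding $\tau \mapsto Z(\tau) = \left(\begin{smallmatrix}2a\tau & b\tau \\ b\tau & 2c\tau\end{smallmatrix}\right)$ used in the proof of Theorem~\ref{theorem product expansion intro}, but to substitute into a \emph{different} Borcherds product expansion of the Siegel theta constant $\Theta(Z)$. Borcherds' theorem produces such an expansion for every Weyl chamber in the relevant positive cone; the expansion invoked earlier corresponds to the chamber $\alpha+\gamma>0$ and pulls back to the product of mixed factors $\bigl((1+q^n)/(1-q^n)\bigr)^{d_A(n)}$. Taking instead the Weyl chamber whose closure contains the ray swept out by $\Im Z(\tau)$ as $\Im\tau\to\infty$ --- equivalently, the chamber cut out by $\alpha a+\beta b+\gamma c>0$ --- one expects the pullback to collapse into a classical Borcherds product of the shape $\prod_{n\ge 1}(1-q^n)^{\tilde c_A(n^2)}$.

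Concretely, after substitution each monomial $q^{\alpha/2}r^\beta s^{\gamma/2}$ becomes $q^{\alpha a+\beta b+\gamma c}$ in the variable $q=e^{2\pi i\tau}$, so the triples $(\alpha,\beta,\gamma)$ with $\alpha a+\beta b+\gamma c=n$ collect into a single factor $(1-q^n)^{\tilde c_A(n^2)}$, where $\tilde c_A(n^2)$ is a finite sum (cf.\ Remark~\ref{remark finite}) of coefficients $c_{1/\theta}(\alpha\gamma-\beta^2)$ with signs determined by the chosen chamber. To recognize this as a Borcherds product in the standard sense and to identify $F_A$, I would invoke the restriction/see-saw principle for Borcherds lifts: the pullback of the signature-$(2,3)$ Borcherds lift to the signature-$(2,1)$ sub-Grassmannian attached to $A$ is itself the Borcherds lift of a weakly holomorphic weight-$1/2$ form on the smaller lattice. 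When $D$ is odd and fundamental, the discriminant form of this sub-lattice is cyclic of order $|D|$, and a standard vector-to-scalar translation identifies the input with a scalar form $F_A$ on $\Gamma_0(4|D|)$ of weight $1/2$ whose squared-index coefficients (with the halving convention) give exactly $\tilde c_A(n^2)$.

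\textbf{Main obstacle.} The principal technical difficulty is the bookkeeping of the chamber change: selecting the correct Weyl chamber so that the pullback degenerates to a one-variable product, tracking the Weyl vector and the signs attached to lattice vectors in that chamber, and then translating vector-valued modular forms on the Weil representation of the signature-$(2,1)$ lattice into the scalar form $F_A$ on $\Gamma_0(4|D|)$ --- the last step being where the oddness and fundamentality of $D$ enter. Once this is done, the halving $\tilde c_A(n)=c_A(n)/2$ for $N\nmid n$, and hence the integrality of $\tilde c_A(n)$, should follow as a combinatorial consequence of the symmetry $\lambda\leftrightarrow-\lambda$ on the signature-$(2,1)$ lattice: two distinct lattice vectors contribute equally to each such coefficient, so the resulting sum is even off the fixed locus.
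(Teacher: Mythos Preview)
Your overall approach---restrict the Borcherds product $\Theta(Z)=\Psi_1(Z)$ to the Hermann curve $Z=A\tau$ and then recognise the one-variable input via a see-saw/restriction principle on the sublattice $P=\Z A\cong A_1(|D|)$---is exactly what the paper does. In particular your identification of the vector-to-scalar step (odd fundamental $D$ $\Rightarrow$ cyclic discriminant form $\Rightarrow$ scalar form on $\Gamma_0(4|D|)$) and of the halving from the $\lambda\leftrightarrow-\lambda$ symmetry is on target.

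One point of your framing is off, however. The passage from the $\bigl(\tfrac{1+q^n}{1-q^n}\bigr)$-product of Theorem~\ref{theorem product expansion intro} to the $(1-q^n)$-product of Theorem~\ref{theorem Borcherds products intro} is \emph{not} a change of Weyl chamber. The paper shows (in the proof of the $O(2,m+2)$ product, their Theorem~3.2) that the principal part of the input $F_1$ is supported on a single coset $\mu$ for which $K_{\mu,n}=\emptyset$ at all relevant $n$; consequently $\Psi_1$ has a \emph{single} Weyl chamber, namely the whole positive cone, and the conditions $\alpha+\gamma>0$ and $\alpha a+\beta b+\gamma c>0$ cut out the same set (this is precisely the content of \cite[Lemma~3.2]{bruinierhabil} already used for Theorem~\ref{theorem product expansion intro}). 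What actually changes is which presentation of the \emph{same} product one restricts: the $(1\pm q^n)$-form comes from algebraically repackaging the ``raw'' expansion $\prod_{\lambda}(1-e((\lambda,Z)))^{c_{F_1}(\lambda,Q(\lambda))}$, and it is this raw expansion that one plugs $Z=A\tau$ into to obtain $\prod_{n\ge1}(1-q^n)^{C(n)}$ with $C(n)=\sum_{(\lambda,A)=n}c_{F_1}(\lambda,Q(\lambda))$.

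The paper then makes your see-saw step completely explicit rather than citing a general principle: with $N=A^\perp\cap K$ one has $P\oplus N\subset K$ of finite index, one transports $F_1$ to a form for $\rho_{P\oplus N}$, and defines
\[
F_A=\sum_{\beta\in P'/P}\Bigl(\sum_{\alpha\in N'/N}F_{1,P\oplus N,\alpha+\beta}\,\Theta_{N(-1),\alpha}\Bigr)\e_\beta\in M^!_{1/2,P},
\]
i.e.\ the theta contraction of $F_1$ against the holomorphic theta series of the positive-definite rank-$2$ lattice $N(-1)$. A direct Fourier computation (their Lemma~5.3) gives $c_A(n,n^2/4|D|)=C(n)$, which is the matching of exponents you anticipated. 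So your plan is correct once the ``different Weyl chamber'' motivation is replaced by ``use the unreduced product expansion of $\Psi_1$ and perform the theta contraction along $N$''.
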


Note that the Fourier expansion of $F_A$ can be described explicitly, see Remark~\ref{remark Borcherds products intro} below. In Theorem~\ref{theorem Borcherds products} we will prove a more general version of Theorem~\ref{theorem Borcherds products intro} for arbitrary discriminants $D$ by describing the Borcherds input $F_A$ as a vector-valued modular form for the Weil representation for the lattice $A_1(|D|) = (\Z,|D|x^2)$. This form is essentially constructed by ``multiplying'' a vector-valued version of $1/\theta$ with a certain vector-valued binary theta function. 

\begin{remark}\label{remark Borcherds products intro}
	The Fourier expansion of the Borcherds input $F_A$ from Theorem~\ref{theorem Borcherds products intro} can be described in terms of the Fourier coefficients of the weight $-1/2$ modular forms $1/\theta$ and $\eta(2\tau)/\eta(4\tau)^{2} = q^{-1/4}(1-q^{2} + q^{4} - 2q^{6} +  \dots)$. More precisely, we have the explicit formula
	\[
	F_A = \sum_{\ell \in \Z}\sum_{h = 1}^{2|D|}\sum_{\substack{\alpha,\beta,\gamma \in \Z \\ \alpha a +\beta  b + \gamma c = h }}c(\alpha,\beta,\gamma,\ell)q^{4|D|\ell+h^2}
	\]
	where 
	\begin{align*}
	c(\alpha,\beta,\gamma,\ell) = \begin{dcases}
	-\frac{2}{\theta(\tau/4)}\left[\ell+\tfrac{1}{4}(\alpha \gamma-\beta^2)\right], & \text{$\alpha$ or $\gamma$ odd,} \\
	\begin{multlined}
		 \left(\frac{2}{\theta(\tau/4)}+(-1)^{(\frac{\alpha}{2}+1)(\frac{\gamma}{2}+1)}\frac{1}{\theta(\tau)}\right.  \\ 
		 \left. \qquad + \frac{1}{2}\left(1-(-1)^{\frac{\alpha\gamma}{4}} \right)\frac{\eta(2\tau)}{\eta(4\tau)^2} \right)\left[\ell+\tfrac{1}{4}(\alpha \gamma-\beta^2)\right],\end{multlined}& \text{$\alpha,\gamma$ even.}
	\end{dcases}
	\end{align*}
	Here we wrote $f[\ell]$ for the coefficient at $q^\ell$ in a $q$-series $f$. 
\end{remark}

\begin{example}
	We again consider the quadratic form $A = [1,1,1]$. The Borcherds input $F_A$ from Theorem~\ref{theorem Borcherds products intro} can be computed using Remark~\ref{remark Borcherds products intro} as
	\begin{align*}
	F_{[1,1,1]} = q^{-3} &+ 1 - 12q + 42q^4 - 76q^9 + 168q^{12} - 378q^{13}  \\
	&+ 690q^{16} - 897q^{21} + 1456q^{24} - 3468q^{25} + \ldots \in M_{1/2}^!(12).
	\end{align*}
	We obtain the product expansion
	\[
	\vartheta_{[1,1,1]} = (1-q)^{-12/2}(1-q^2)^{42/2}(1-q^3)^{-76}(1-q^4)^{690/2}(1-q^5)^{-3468/2}\cdots
	\]
\end{example}

An important property of Borcherds products is the fact that their roots and poles lie at quadratic irrationalities, whose locations and orders can be read off from the principal part of the Borcherds input. However, instead of analyzing the principal part of $F_A$, it seems to be slightly easier to read off the roots of $\vartheta_A$ directly from the fact that $\vartheta_A$ is a restriction of the degree $2$ theta constant $\Theta(Z)$, whose roots are well-known. This was carried out in detail by Hermann in \cite{hermann}. To describe the roots, we let $\mathcal{Q}_{N,d,h}$ be the set of all level $N$ CM (or Heegner) points of discriminant $d$ and residue class $h$, which are defined to be the roots $\tau \in \H$ of quadratic polynomials $A\tau^2 + B \tau + C$ for some $A,B,C \in \Z$ with $A > 0, A \equiv 0 \pmod N$, $B \equiv h \pmod{2N}$, and $B^2-4AC = d < 0$.

\begin{theorem}[Hermann \cite{hermann}]
	Let $A$ be a positive definite, integral binary quadratic form of discriminant $D <0$. Then $\vartheta_A$ has simple roots precisely at the level $|D|$ CM points in the union of the sets $\mathcal{Q}_{|D|,d,h}$ over all $0 \leq h < 2|D|$ and $D < d < 0$ of the form
	\begin{align*}
		h &= \alpha a + \beta b + \gamma c, \qquad	d = D + D(\alpha \gamma - \beta^2) + h^2,
	\end{align*}
	for some $\alpha,\beta,\gamma \in \Z$ satisfying $\alpha \gamma - \beta^2 > 0$ and $\alpha,\gamma \equiv 2 \pmod 4, \beta$ odd.
\end{theorem}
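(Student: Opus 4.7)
The plan is to exploit the identification $\vartheta_A(\tau) = \Theta(Z_A(\tau))$ with $Z_A(\tau) = \left(\begin{smallmatrix} 2a\tau & b\tau \\ b\tau & 2c\tau \end{smallmatrix}\right)$ recalled earlier, and to read off the zeros of $\vartheta_A$ by pulling back the divisor of the Siegel theta constant $\Theta$ on $\H_2$ along the embedded modular curve.

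The first step is to determine the divisor of $\Theta$ on $\H_2$. Viewing $\H_2$ as the period domain for a signature $(2,3)$ quadratic space and $\Theta$ as a Borcherds product with input (essentially) $1/\theta$, Borcherds' theorem describes $\mathrm{div}(\Theta)$ as a union of Heegner (rational quadratic) divisors, each cut out by an equation of the form
\[
\alpha_0 + \alpha \tau_{11} + 2\beta \tau_{12} + \gamma \tau_{22} + 2 \det(Z) = 0,
\]
indexed by integer tuples $(\alpha_0, \alpha, \beta, \gamma)$ subject to the parity and integrality conditions imposed by the $(0,0)$-characteristic of $\Theta$. The constant term $c_{1/\theta}(0) = 1$ of the input assigns multiplicity $+1$ to each such component, which will account for the simplicity of the zeros of $\vartheta_A$ in the statement.

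Next I would pull back to $\H$. Under $Z_A$ one has $\tau_{11}=2a\tau$, $\tau_{12}=b\tau$, $\tau_{22}=2c\tau$ and $\det Z_A(\tau) = -D\tau^2$, so the Heegner equation becomes $-2D\tau^2 + 2(\alpha a + \beta b + \gamma c)\tau + \alpha_0 = 0$. Setting $h := \alpha a + \beta b + \gamma c$ and dividing by $2$, this is the CM polynomial $|D|\tau^2 + h\tau + \alpha_0/2 = 0$, so $A = |D|$ (placing the roots in the level $|D|$ CM set), $B = h$, $C = \alpha_0/2$, and $B^2 - 4AC = h^2 - 2|D|\alpha_0$. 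The integrality/parity constraint from the Heegner data forces $\alpha_0 = (1 + \alpha\gamma - \beta^2)/2$, which when plugged in yields the claimed discriminant $d = h^2 + D(1 + \alpha\gamma - \beta^2)$.

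The parity conditions $\alpha, \gamma \equiv 2 \pmod 4$ and $\beta$ odd in the statement are exactly what is needed to guarantee that $\alpha_0 = (1+\alpha\gamma-\beta^2)/2$ is an even integer (and hence $\alpha_0/2 \in \Z$): under these parities one has $\alpha\gamma \equiv 4 \pmod 8$ and $\beta^2 \equiv 1 \pmod 8$, so $1+\alpha\gamma-\beta^2 \equiv 4 \pmod 8$. These parities also arise intrinsically from the $(0,0)$-characteristic of $\Theta$ and can be tracked via the sign $(-1)^{(\alpha+1)(\gamma+1)}$ in the product expansion recalled above. The main obstacle is the first step: explicitly identifying the divisor of $\Theta$ on $\H_2$ with the correct parity conditions and simple multiplicities. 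This can be executed via Borcherds' machinery for the relevant $\O(2,3)$ lattice, or more directly as done in Hermann's paper \cite{hermann} using the transformation laws of Siegel theta constants. Once the divisor is in hand, the pullback step reduces to the routine linear algebra above.
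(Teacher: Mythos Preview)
Your strategy---restrict the known divisor of the Siegel theta constant $\Theta$ on $\H_2$ to the embedded curve $\tau \mapsto \left(\begin{smallmatrix} 2a\tau & b\tau \\ b\tau & 2c\tau \end{smallmatrix}\right)$---is exactly what the paper indicates; the paper does not carry out the details itself but attributes them to Hermann \cite{hermann}. Within the paper's own setup, the clean way to execute your first step is via Proposition~\ref{proposition Borcherds input}: the principal part of the input $F_1$ is $q^{-1/4}\e_\mu$ with $\mu=(1/2,1/2,1/2)$, so Borcherds' theorem gives $\mathrm{div}(\Theta)=H(\mu,-1/4)$ with multiplicity one, and the coset $\mu$ is precisely the parity condition $\alpha,\gamma\equiv 2\pmod 4$, $\beta$ odd.

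Two points in your write-up need tightening. First, the multiplicity $+1$ does not come from $c_{1/\theta}(0)=1$; that constant term fixes the \emph{weight} $1/2$ of $\Theta$, not its divisor. The multiplicity one comes from the leading coefficient $1$ of $h=\eta(2\tau)/\eta(4\tau)^2=q^{-1/4}(1-q^2+\dots)$ in the principal part of $F_1$. Second, in your Heegner equation you fix the coefficient of $\det Z$ to be $2$ and then claim integrality ``forces'' $\alpha_0=(1+\alpha\gamma-\beta^2)/2$. But in $L=U\oplus K$ a vector $\lambda\in L+\mu$ with $Q(\lambda)=-1/4$ carries two $U$-coordinates $e,f\in\Z$ subject only to $ef=-(1+\alpha\gamma-\beta^2)/4$; your choice is the case $e=1$. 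The other factorizations give CM polynomials whose leading coefficient is a higher multiple of $|D|$, still lying in the same $\mathcal{Q}_{|D|,d,h}$, so the final description of the zero locus is unchanged---but this bookkeeping, together with the verification that distinct $\lambda$'s never cut out the same point on the curve, is what actually justifies the ``simple roots'' claim.
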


\begin{remark}
	The above description yields a simple algorithm to determine all roots of $\vartheta_A$.  First, find all $\alpha,\beta,\gamma \in \Z$ with $\alpha \gamma - \beta^2 > 0$ and $\alpha,\gamma \equiv 2 \pmod 4,\beta$ odd, such that $h = \alpha a + \beta b + \gamma c$ satisfies $0 \leq h < 2|D|$. By Remark~\ref{remark finite}, we only need to search in the range $0 < \alpha \leq 8c, 0<\gamma \leq 8a, |\beta| < \sqrt{\alpha \gamma}$. Then, for every such tuple $(\alpha,\beta,\gamma)$, check whether $d = D + D(\alpha \gamma - \beta^2) + h^2$ is negative ($d > D$ is automatically satisfied). If this is the case, then $\vartheta_A$ has simple roots at the CM points in $\mathcal{Q}_{|D|,d,h}$.
	
	As an example, we again consider $A = [1,1,1]$ with discriminant $D = -3$. There are precisely four tuples $(\alpha,\beta,\gamma) \in \Z^3$ satisfying the above conditions, $(\alpha,\beta,\gamma)= (2,-1,2)$, $(2,-3,6)$, $(2, 1,2)$, $(6,-3,2)$. For $(\alpha,\beta,\gamma)=(2,-1,2)$ we compute $h = 3$ and $d = -3$, and for the other three tuples we obtain $h = 5$ and $d = 13$. Hence, $\vartheta_{[1,1,1]}$ has simple roots precisely at the level $3$ CM points in $\mathcal{Q}_{3,-3,3}$.
\end{remark}

This work is organized as follows. We start with a short section on the necessary preliminaries about vector-valued modular forms for the Weil representation. In Section~\ref{section O2m Borcherds product} we construct, for any $m \in \N$, an automorphic product $\Psi_m$ on $O(2,m+2)$, which we identify in Section~\ref{section theta constants} (for $m = 1,2,3$) with suitable theta constants whose restrictions give binary/quaternary/octanary theta functions on $\H$. This yields a generalization of Theorem~\ref{theorem product expansion intro} to quaternary and octonary theta functions. In Section~\ref{section theta functions Borcherds products} we derive the product expansion in Theorem~\ref{theorem Borcherds products intro} and its generalization to quaternary and octonary theta functions. Finally, in Section~\ref{section counterexamples} we discuss some examples of theta functions which are not Borcherds products.

{\bf Acknowledgement.}
The first author was supported by SNF projects 200021\_185014 and PZ00P2\_202210. The second author was affiliated with TU Darmstadt during part of the work on this project and received funding from the LOEWE research unit USAG (Uniformized Structures in Arithmetic and Geometry).

\section{Preliminaries - Vector-valued modular forms for the Weil representation}

Let $L$ be an even lattice of signature $(b^+,b^-)$ with quadratic form $Q: L \to \Z$ and associated bilinear form $(x,y) = Q(x+y)-Q(x)-Q(y)$. For $N \in \Z$ we let $L(N)$ denote the rescaled lattice $(L,NQ)$.

Let $L'$ be the dual lattice of $L$. The discriminant form $L'/L$ is a finite quadratic module with the quadratic form $Q(x+L) = Q(x) \pmod \Z$. Let $\C[L'/L]$ be the group algebra of $L'/L$, which is generated by the formal basis vector $\e_{\gamma}$ for $\gamma \in L'/L$. It is equipped with a natural inner product $\langle \cdot ,\cdot\rangle$ which is antilinear in the second variable.

Let $\Mp_2(\Z)$ be the integral metaplectic group, consisting of pairs $\left(M,\phi \right)$ with $M = \left(\begin{smallmatrix}a & b \\ c & d \end{smallmatrix}\right) \in \SL_2(\Z)$ and $\phi: \H \to \C$ holomorphic with $\phi(\tau)^2 = c\tau + d$. The Weil representation $\rho_L$ associated with $L$ is a unitary representation of $\Mp_2(\Z)$ on $\C[L'/L]$ which is defined on the generators $T = \left(\left(\begin{smallmatrix}1 & 1 \\ 0 & 1 \end{smallmatrix}\right),1\right)$ and $S = \left(\left(\begin{smallmatrix}0 & -1 \\ 1 & 0 \end{smallmatrix}\right),\sqrt{\tau} \right)$ by
\[
\rho_L(T)\e_\gamma = e(Q(\gamma))\e_\gamma, \qquad \rho_L(S)\e_\gamma = \frac{e((b^- - b^+)/8)}{\sqrt{|L'/L|}}\sum_{\beta \in L'/L}e(-( \beta,\gamma )) \e_\beta,
\]
where we put $e(x) = e^{2\pi i x}$ for $x \in \C$.
Note that $\rho_L$ only depends on the discriminant form $L'/L$ rather than on the lattice $L$ itself.

A function $F: \H \to \C[L'/L]$ is called a weakly holomorphic modular form of weight $k \in \frac{1}{2}\Z$ for $\rho_L$ if it is holomorphic on $\H$, if it is invariant under the slash operator
\[
F|_{k,L}(M,\phi) = \phi^{-2k}\rho_L(M,\phi)^{-1}F(M\tau)
\]
for all $(M,\phi) \in \Mp_2(\Z)$, and if it is meromorphic at $\infty$. The latter condition means that $F$ has a Fourier expansion of the form
\[
F(\tau) = \sum_{\gamma \in L'/L}\sum_{\substack{n \in \Z + Q(\gamma) \\ n \gg -\infty}}c_F(\gamma,n)q^n \e_\gamma
\]
with coefficients $c_F(\gamma,n) \in \C$. We denote the space of weakly holomorphic modular forms of weight $k$ for $\rho_L$ by $M_{k,L}^!$.

\section{A Borcherds product on $O(2,m+2)$}
\label{section O2m Borcherds product}

In this section we construct, for any $m \in \N$, a Borcherds product $\Psi_m(Z)$ on $O(2,m+2)$ which corresponds to the weight $-m/2$ weakly holomorphic modular form $2^{m-1}/\theta^m$. For $m=1,2,3$ we will see later that $\Psi_m(Z)$ can be identified with a theta constant whose restrictions yield binary, quaternary, or octonary theta functions on $\mathcal{H}$.

For $m \in \N$ we consider the lattice
\[ 
K = U(4)\oplus m A_1(-1).
\]
Here $U(4) \cong (\Z^2, Q(\alpha,\gamma) = 4\alpha\gamma)$ denotes a rescaled hyperbolic plane, and $m A_{1}(-1)$ denotes $m$ copies of the unary lattice $A_1(-1) \cong (\Z,Q(\beta) = -\beta^2)$. Note that $K$ has signature $(1,m+1)$ and level $4$.

We will realize $K$ as $\Z \times \Z^m \times \Z$ with the quadratic form
\[
Q(\alpha,\beta_1,\dots,\beta_m,\gamma) = 4\alpha \gamma -\beta_1^2 - \ldots - \beta_m^2.
\]
Then its dual lattice is given by
\begin{align}\label{Kprime}
K' = \left\{(\alpha/4,\beta_1/2,\dots,\beta_m/2,\gamma/4)\, : \, \alpha,\gamma, \beta_1,\dots,\beta_m \in \Z\right\}.
\end{align}
In particular, we have $K'/K \cong (\Z/4\Z)^2 \times (\Z/2\Z)^m$.

We also define the lattice
\[
L = U \oplus K,
\]
where $U$ denotes an integral hyperbolic plane. Since $U$ is a unimodular even lattice of signature $(1,1)$, we see that $L$ has signature $(2,m+2)$, level $4$, and $L'/L \cong K'/K$. Therefore, we will not mention the variables of $U$ when we work in $L'/L$. Moreover, note that vector-valued modular forms for the Weil representation $\rho_L$ are the same as modular forms for $\rho_K$.

For the rest of this section, fix the element
\[
\mu = (1/2,1/2,\dots,1/2,1/2) \in K'/K \cong L'/L;
\]
that is, $\mu$ represents the class in $K'/K$ of vectors $(\alpha/4,\beta_1/2,\dots,\beta_m/2,\gamma/4) \in K'$ with $\alpha, \gamma \equiv 2 \pmod 4$ and $\beta_1,\dots,\beta_m$ all odd.

We now will construct a vector-valued modular form for the Weil representation $\rho_L$ by averaging the scalar-valued modular form $2^{m-1}/\theta^m$.

\begin{proposition}\label{proposition Borcherds input}
	Consider the following weakly holomorphic modular forms of weight $-1/2$:
	\begin{align*}
f &= \frac{1}{\theta} = 1 -2q + 4q^{2} -8q^{3} + 14q^{4} - 24q^{5} + 40q^{6} - 64q^{7} + \dots  \\
g &= \frac{1}{\sqrt{2i}}f|_{-1/2}S = f(\tau/4) = 1 -2q^{1/4} + 4q^{1/2} -8q^{3/4} + 14q - 24q^{5/4} + \dots \\ 
h &= -2if|_{-1/2}ST^2 S = \frac{\eta(2\tau)}{\eta(4\tau)^{2}} = q^{-1/4}\big(1-q^{2} + q^{4} - 2q^{6} + 3q^{8} - 4q^{10} +  \dots \big) 
\end{align*}
	Write $g^m = g_{0}^m + g_{1/4}^m+g_{1/2}^m+g_{3/4}^m$ where $g_{j}^m$ is the subseries\footnote{Note that by $g_j^m$ we really mean $(g^m)_j$, which is in general not the same as $(g_j)^m$ the $m$-th power of $g_j$ in the splitting $g = g_0+\dots+g_{3/4}$.} of $g^m$ consisting of terms $q^{n}$ with $n \equiv j \pmod 1$. We consider the $\C[L'/L]$-valued function
\begin{align*}
F_m &= h^m\e_{\mu} + \sum_{\lambda \in 2(L'/L)}\sgn(\lambda/2)2^{m-1}f^m\e_{\lambda} - \sum_{\lambda \in L'/L}  \sgn(\lambda) 2^m g_{Q(\lambda)}^m\e_{\lambda},
\end{align*}
where we wrote $2(L'/L) = \{2\nu: \nu \in L'/L\}$ and $\sgn(\lambda) = (-1)^{(\alpha+1)(\gamma+1)}$ for $\lambda \in K'$ as in \eqref{Kprime}. Then $F_m$ is a weakly holomorphic modular form of weight $-m/2$ for the Weil representation $\rho_L$. Its principal part is supported on $\e_\mu$, and is given by the principal part of $h^m$. Moreover, $F_m$ has constant term $c_{F_m}(0,0) = 2^{m-1}$ and integral Fourier coefficients.
\end{proposition}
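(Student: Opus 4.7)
The plan is to verify modularity of $F_m$ by checking invariance under the standard generators $T$ and $S$ of $\Mp_2(\Z)$, and then to read off the principal part, constant term, and integrality directly from the explicit formula.

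First I would handle $T$-invariance, which amounts to showing that every Fourier term $q^n\e_\lambda$ appearing in $F_m$ satisfies $n \equiv Q(\lambda) \pmod{\Z}$, so that $\rho_L(T)\e_\lambda = e(Q(\lambda))\e_\lambda$ matches the shift $\tau \mapsto \tau + 1$. For $h^m\e_\mu$ this is the congruence $-m/4 \equiv Q(\mu) \pmod{\Z}$, which follows from $Q(\mu) = 1 - m/4$. For $\lambda \in 2(L'/L)$ one has $Q(\lambda) \in \Z$, since writing $\lambda = 2\nu$ gives $Q(\lambda) = 4Q(\nu)$; this matches the integer exponents of $f^m$. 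For the third summand the subseries $g^m_{Q(\lambda)}$ is by definition the part of $g^m$ whose exponents lie in $Q(\lambda) + \Z$, so the congruence is built in.

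The main task, and what I expect to be the central obstacle, is $S$-invariance: showing $F_m(-1/\tau) = \tau^{-m/2}\rho_L(S) F_m(\tau)$. On the left-hand side I would substitute the transformation behavior of the scalar pieces, which is encoded in the definitions: $f|_{-1/2}S = \sqrt{2i}\,g$, together with the relation $h = -2i f|_{-1/2}ST^2S$ and the standard $\Mp_2(\Z)$-relations yield matching formulas for $g|_{-1/2}S$ and $h|_{-1/2}S$ in terms of translates of $f, g, h$ under powers of $T$. On the right-hand side, $\rho_L(S)$ rewrites each $\e_\lambda$ as a character sum
\[
\rho_L(S)\e_\lambda = \frac{e(m/8)}{4 \cdot 2^{m/2}}\sum_{\beta \in L'/L}e(-(\beta,\lambda))\e_\beta.
\]
After expanding both sides and regrouping by the index $\beta \in L'/L$, the identity reduces to a finite collection of Gauss-sum-like evaluations on $L'/L \cong (\Z/4\Z)^2 \times (\Z/2\Z)^m$. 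The restriction of the $f^m$-piece to $2(L'/L)$, the support of the $h^m$-piece on $\e_\mu$, the splitting $g^m = g^m_0 + g^m_{1/4} + g^m_{1/2} + g^m_{3/4}$, and the sign $(-1)^{(\alpha+1)(\gamma+1)}$ are all calibrated so that these character sums close up correctly. Conceptually, this is the explicit version of lifting the scalar form $2^{m-1}/\theta^m$ for $\Gamma_0(4)$ to a vector-valued form for $\rho_L$, with the three pieces $f^m$, $g^m$, $h^m$ corresponding to the three cusps $\infty$, $0$, $1/2$ of $\Gamma_0(4)$.

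The remaining assertions follow quickly from the explicit formula once modularity is established. Since $f^m$ and $g^m$ are holomorphic at infinity, only the $h^m\e_\mu$ summand contributes negative $q$-powers, so the principal part of $F_m$ equals the principal part of $h^m$ supported on $\e_\mu$. For the constant term at $\lambda = 0$ the $h^m\e_\mu$-summand vanishes since $\mu \neq 0$; using $\sgn(0/2) = \sgn(0) = -1$ and the fact that $f^m$ and $g^m_0$ both have constant term $1$, the $f^m$-sum contributes $-2^{m-1}$ and the $g^m$-sum contributes $-(-1)\cdot 2^m \cdot 1 = 2^m$, summing to $c_{F_m}(0,0) = 2^{m-1}$. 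Integrality is immediate: $f$, $g$, and $h$ all have integer $q$-coefficients, hence so do $f^m$, $h^m$, and each subseries $g^m_j$, while the prefactors $2^{m-1}$ and $2^m$ preserve integrality.
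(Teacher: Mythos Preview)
Your approach is correct but takes a genuinely different route from the paper. The paper \emph{defines} $F_m$ as a signed sum of four scalar-to-vector-valued lifts
\[
G_\lambda = \sum_{\tilde M \in \tilde\Gamma_0(4) \backslash \Mp_2(\Z)} (2^{m-1}/\theta^m)\big|_{-m/2}\tilde M \,\cdot\, \rho_L(\tilde M)^{-1}\e_\lambda,
\]
one for each $\lambda \in 2(L'/L)$, so modularity is automatic and the work lies entirely in computing the Fourier expansion: one runs through the six coset representatives $I$, $ST^2S$, $ST^j$ ($0\le j\le 3$), evaluates the Gauss sums in $\rho_L(ST^2S)^{-1}\e_\lambda$ and $\rho_L(ST^j)^{-1}\e_\lambda$ explicitly on $L'/L \cong (\Z/4\Z)^2 \times (\Z/2\Z)^m$, and checks that the result matches the displayed formula for $F_m$. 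You instead start from that formula and verify $T$- and $S$-invariance directly; you correctly note at the end that this is the same lifting made explicit, with $f^m$, $g^m$, $h^m$ attached to the three cusps of $\Gamma_0(4)$. The paper's route buys modularity for free and localizes all computation to the Weil-representation side, while your route is more self-contained (no appeal to the general induction construction of Scheithauer) but requires tracking the scalar transformations $f^m|S$, $g^m|S$, $h^m|S$ and the $\rho_L(S)$-action simultaneously---in the end the same Gauss sums appear, just organized differently. Your treatment of the principal part, the constant term $c_{F_m}(0,0)=2^{m-1}$, and integrality agrees with the paper's.
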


\begin{proof}
	Throughout the proof we will identify $L'/L$ with $K'/K$, and write elements $\lambda \in L'/L$ in the form $\lambda = (\alpha/4,\beta_1/2,\dots,\beta_m/2,\gamma/4)$ with $\alpha,\gamma \in \Z/4\Z$ and $\beta_1,\dots,\beta_m \in \Z/2\Z$. Note that there are precisely four elements $\lambda \in 2(L'/L)$, namely those with $\beta_1 ,\dots,\beta_m$ all even, and $\alpha,\gamma \in \Z/4\Z$ both even. For each such $\lambda \in 2(L'/L)$ we consider the weakly holomorphic vector-valued modular form
	\[
	G_{\lambda} = \sum_{\tilde{M} \in \tilde{\Gamma}_{0}(4)\setminus \Mp_{2}(\Z)}(2^{m-1}/\theta^m)|_{-m/2}\tilde{M} \ \rho_{L}(\tilde{M})^{-1}\e_{\lambda} \in M_{-m/2,L}^!,
	\]
	of weight $-m/2$ for $\rho_L$. This scalar-to-vector-valued lift has been studied in detail in \cite{scheithauer}. Note that, for the lift to be well-defined, it is important that $2\lambda = 0$ in $L'/L$ and $Q(\lambda) = 0 \pmod \Z$.
		
	We consider the modular form
	\[
	F_m = \sum_{\lambda \in 2(L'/L)}\sgn(\lambda/2)G_\lambda \in M_{-m/2,L}^!,
	\]
	and show that is has an expansion as stated in the proposition. Note that $\sgn(\lambda/2)$ equals $-1$ if $\lambda = 0$ and $1$ otherwise. Fix $\lambda \in 2(L'/L)$. We have $[\SL_{2}(\Z):\Gamma_{0}(4)] = 6$, and representatives are given by
	\begin{align*}
	I = \begin{pmatrix}1 & 0 \\ 0 & 1 \end{pmatrix}, \quad	ST^{2}S = \begin{pmatrix} -1 & 0\\ 2 & -1\end{pmatrix}, \quad
	ST^j = \begin{pmatrix} 0 & -1 \\ 1 & j \end{pmatrix} \ (0 \leq j \leq 3).
	\end{align*}
	Hence $G_\lambda$ is given by
	\[
	G_\lambda = 2^{m-1}f^m \e_\lambda + 2^{m-1}(-2i)^{-m}h^m \rho_L(ST^2S)^{-1}\e_\lambda + \sum_{j=0}^3 2^{m-1}(\sqrt{2i})^m g^m(\tau+j) \rho_L(ST^j)^{-1}\e_\lambda.
	\]
	Now we compute the action of the Weil representation. Recall that $|L'/L| = |K'/K| = 2^{m+4}$ and $L$ has signature $(2,m+2)$. Hence, by the definition of the Weil representation $\rho_L$, we have for any $j \in \Z$,
	\[
	\rho_L(ST^j)^{-1}\e_\lambda = \frac{\sqrt{i}^{-m}}{\sqrt{2^{m+4}}}\sum_{\delta \in L'/L}e((\delta,\lambda)-jQ(\delta))\e_\delta.
	\]
	Similarly, we compute
	\[
	\rho_L(ST^2 S)^{-1}\e_\lambda = \frac{i^{-m}}{2^{m+4}}\sum_{\nu \in L'/L}\sum_{\delta \in L'/L}e((\delta,\lambda+\nu)-2Q(\delta))\e_\nu.
 	\]
	For fixed $\nu \in L'/L$ the inner sum can be computed if we write $\delta = (\alpha/4,\beta_1/2,\dots,\beta_m/2,\gamma/4)$ with $\alpha,\gamma \in \Z/4\Z$ and $\beta_1,\dots,\beta_m \in \Z/2\Z$ (and similarly for $\lambda + \nu$). The result is 
	\[
	\sum_{\delta \in L'/L}e((\delta,\lambda+\nu)-2Q(\delta)) = \begin{cases}-2^{m+3}, & \text{if $\nu = \lambda+\mu$ mod $L$,} \\
	2^{m+3}, & \text{if $\nu \in 2(L'/L) + \mu$ but $\nu \neq \lambda + \mu$ mod $L$}, \\
	0, &  \text{otherwise.}
	\end{cases}
	\]
	Combining the above formulas, it is now straightforward to check that $F_m$ has the expansion in the proposition. The remaining claims about the principal part and the Fourier coefficients of $F_m$ follow immediately.
\end{proof}

%

We let $C \subset K \otimes \R$ be the positive cone consisting of vectors of positive norm with $\alpha > 0$, and we let $\H_{m+1} = (K \otimes \R) \oplus iC$ be the upper half-space model of the hermitian symmetric domain for $O(2,m+2)$. We will write $Z \in \mathcal{H}_{m+1}$ as $Z = (w,-z/2,\tau)$ with $w,\tau \in \H$ and $z = (z_1,\dots,z_m) \in \C^m$.

\begin{theorem}\label{theorem O2m Borcherds product}
	For each $m \in \N$ there exists a (meromorphic) automorphic product $\Psi_m(\tau,z,w)$ of weight $2^{m-2}$ on $\H_{m+1}$ with expansion
	\[
	\Psi_m(\tau,z,w) = \!\!\!\!\!\prod_{\substack{\alpha,\beta_1,\dots,\beta_m,\gamma \in \Z \\ \alpha +\gamma > 0}}\!\!\left(\frac{1+(-1)^{(\alpha+1)(\gamma+1)}q^{\alpha/2} r_1^{\beta_1} \cdots r_m^{\beta_m} s^{\gamma/2}}{1-(-1)^{(\alpha+1)(\gamma+1)}q^{\alpha/2} r_1^{\beta_1} \cdots r_m^{\beta_m} s^{\gamma/2}}\right)^{c_{2^{m-1}/\theta^m}(\alpha \gamma -\beta_1^2 - \ldots - \beta_m^2)}
	\]
	with $q = e^{2\pi i \tau}, r_j = e^{2\pi i z_j}, s = e^{2\pi i w}$. For $m \leq 8$, it is holomorphic.
\end{theorem}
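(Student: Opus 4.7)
The plan is to apply Borcherds' Theorem~13.3 of \cite{borcherds} to the vector-valued input form $F_m \in M^!_{-m/2, L}$ constructed in Proposition~\ref{proposition Borcherds input}. Since $L$ has signature $(2, m+2)$, the weight $-m/2 = 1 - (m+2)/2$ is exactly the required Borcherds input weight; combined with the integrality of its Fourier coefficients, this produces a meromorphic automorphic form $\Psi_m$ on $\mathcal{H}_{m+1}$ of weight $c_{F_m}(0,0)/2 = 2^{m-1}/2 = 2^{m-2}$.

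The main step is then to rewrite the raw Borcherds product
\[
\Psi_m(Z) = e((\rho_m, Z)) \prod_{\substack{\lambda \in K' \\ (\lambda, W) > 0}} \bigl(1 - e((\lambda, Z))\bigr)^{c_{F_m}(\lambda + K, Q(\lambda))}
\]
in the claimed form. I would parametrize $\lambda = (\alpha/4, \beta_1/2, \ldots, \beta_m/2, \gamma/4) \in K'$ by integer tuples $(\alpha, \beta_j, \gamma)$, so that $4Q(\lambda) = \alpha\gamma - \sum \beta_j^2$, and split the index set according to the class $\lambda + K \in L'/L$. For $\lambda + K \in 2(L'/L)$ the exponent combines contributions from the $2^{m-1}f^m$ and $2^m g^m_{Q(\lambda)}$ parts of $F_m$; for $\lambda + K = \mu$ it combines $h^m$ and $2^m g^m_{Q(\mu)}$; for the remaining classes only $g^m$ contributes. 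The identity $g(\tau) = f(\tau/4)$, which gives $c_{g^m}(n) = c_{f^m}(4n)$, together with the sign pattern built into $F_m$ arranges for Borcherds factors across residue classes to recombine into single factors of the shape $\bigl((1 + \epsilon X)/(1 - \epsilon X)\bigr)^{c_{2^{m-1}/\theta^m}(\alpha\gamma - \sum \beta_j^2)}$ with $X = q^{\alpha/2} r_1^{\beta_1}\cdots r_m^{\beta_m}s^{\gamma/2}$ and $\epsilon = (-1)^{(\alpha+1)(\gamma+1)}$. The positivity condition $\alpha + \gamma > 0$ is the usual Weyl chamber constraint, cf.~\cite[Lemma~3.2]{bruinierhabil}; the same manipulation is already carried out by Borcherds in his original example for $\Theta(Z)$ in \cite[Example~5, Section~15]{borcherds95}, which is the $m = 1$ case of our theorem.

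For the holomorphicity assertion when $m \leq 8$, the singular divisor of $\Psi_m$ is controlled by the principal part of $F_m$, which is supported on $\e_\mu$ and equals the principal part of
\[
h^m = q^{-m/4}(1 - q^2 + q^4 - 2q^6 + 3q^8 - \cdots)^m.
\]
A monomial $q^{-m/4 + 2\ell}$ with $\ell \geq 0$ lies in the principal part precisely when $\ell < m/8$. For $m \leq 8$ this forces $\ell = 0$, leaving only the leading term $q^{-m/4}$ with coefficient $+1$; hence all multiplicities in the singular divisor of $\Psi_m$ are non-negative, and $\Psi_m$ is holomorphic. For $m \geq 9$ the principal part already picks up the term $-m\, q^{-m/4 + 2}$, which would give a divisor of negative multiplicity.

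The main obstacle is the algebraic bookkeeping in the middle paragraph: verifying that Borcherds factors from different residue classes $\lambda + K \in L'/L$ pair up to produce the $(1 + \epsilon X)/(1 - \epsilon X)$ form with the correct sign $\epsilon$ and scalar exponent $c_{2^{m-1}/\theta^m}$. The form $F_m$ from Proposition~\ref{proposition Borcherds input} was designed precisely to enable this recombination, so in principle the check is a matter of tracking signs and residues mod $4$ on each component of $F_m$, but carrying this out uniformly in $m$ (rather than just reproducing the $m = 1$ case) is the essential novelty.
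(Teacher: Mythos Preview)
Your overall strategy matches the paper's: apply Borcherds' Theorem~13.3 to $F_m$, then recombine the raw product using the explicit components of $F_m$. The holomorphicity argument for $m\le 8$ is also correct.

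However, there is a genuine gap. You write down the raw Borcherds product with a prefactor $e((\rho_m,Z))$ and a Weyl chamber condition $(\lambda,W)>0$, but you never justify that $\rho_m=0$ and that there is a \emph{single} Weyl chamber (the full positive cone $C$). Citing \cite[Lemma~3.2]{bruinierhabil} only lets you replace $(\lambda,C)>0$ by $(\lambda,v)>0$ for a chosen $v\in C$ \emph{after} you know the chamber is all of $C$; it does not establish uniqueness of the chamber or vanishing of the Weyl vector. The paper handles this with a short but essential argument: one shows that for every $n$ with $h^m[n]\ne 0$ (namely $n=-m/4+2\ell$), the set $K_{\mu,n}=\{\lambda\in K+\mu:Q(\lambda)=n\}$ is empty, via a reduction mod~$8$ (write $\lambda=(\alpha/2,\beta_1/2,\dots,\beta_m/2,\gamma/2)$ with all entries odd, compute $4Q(\lambda)\equiv 4-m\pmod 8$, and compare with $4n\equiv -m\pmod 8$). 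This simultaneously kills all walls and forces $\rho_m=0$. Without it your product would a priori carry an unknown exponential prefactor and a chamber-dependent index set.

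A related inaccuracy: in your recombination sketch you say that for $\lambda+K=\mu$ the exponent ``combines $h^m$ and $2^m g^m_{Q(\mu)}$''. In fact the $h^m$ part never contributes to the product at all, precisely because $K_{\mu,n}=\emptyset$ for the relevant $n$; the paper's case split for $c_{F_m}(\lambda,Q(\lambda))$ only involves $f^m$ and $g^m$, distinguished by whether $\lambda\in 2K'$ or not. Once you have this, the identity $g^m_{Q(\lambda)}[Q(\lambda)]=f^m[4Q(\lambda)]$ and a telescoping over $\lambda\mapsto 2\lambda$ yield the $(1+\epsilon X)/(1-\epsilon X)$ form directly.
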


%

\begin{proof}
	 We apply Borcherds' Theorem 13.3 \cite{borcherds} to the weakly holomorphic modular form $F_m$ given in Proposition~\ref{proposition Borcherds input}. Then the weight and the singularities of $\Psi_{m}$ are clear from the explicit Fourier expansion of $F_m$.
	
	For $Z = X+iY \in \mathcal{H}_{m+1}$ with $Q(Y) \gg 0$ large enough we have the product expansion
		\begin{align}\label{eq product expansion}
		\Psi_{m}(Z) = \prod_{\substack{\lambda \in K' \\ (\lambda,C) > 0}}\big(1-e((\lambda,Z))\big)^{c_{F_m}(\lambda,Q(\lambda))}.
		\end{align}
		This also follows from Borcherds' Theorem 13.3 once we have shown that there is only a single Weyl chamber, namely the whole positive cone $C$, and the Weyl vector vanishes. To this end, it suffices to show that the set
	\[
	K_{\mu,n} = \{\lambda \in K + \mu: Q(\lambda) = n\}
	\]
	is empty whenever $(\eta(2\tau)/\eta(4\tau)^2)^m[n] \neq 0$. From the Fourier expansion of $\eta(2\tau)/\eta(4\tau)^2$ given in Proposition~\ref{proposition Borcherds input} we see that we only need to consider $n$ of the form $n = -m/4+2\ell$ for $\ell \in \Z$. For such $n$, suppose that $K_{\mu,n} \neq \emptyset$, and let $\lambda \in K_{\mu,n}$. Since $\mu = (1/2,1/2,\dots,1/2,1/2)$, we can write $\lambda = (\alpha/2,\beta_1/2,\dots,\beta_m/2,\gamma/2)$ with $\alpha,\gamma,\beta_1,\dots,\beta_m$ all odd. We find
	\[
	-m/4+2\ell = n = Q(\lambda) = \alpha \gamma - \beta_1^2/4-\ldots-\beta_m^2/4.
	\]
	Multiplying both sides by $4$, reducing modulo $8$, and using that $4\alpha \gamma \equiv 4 \pmod 8, \beta_j^2 \equiv 1 \pmod 8$, we obtain a contradiction. This yields \eqref{eq product expansion}.

	Finally, we will simplify the product expansion in \eqref{eq product expansion}. Using the Fourier expansion of $F_m$ from Proposition~\ref{proposition Borcherds input}, we find that the exponents in \eqref{eq product expansion} are given by
	\begin{align*}
	c_{F_m}(\lambda,Q(\lambda)) &= \begin{cases}
	(-\sgn(\lambda)2^m g_{Q(\lambda)}^m+\sgn(\lambda/2)2^{m-1}f^m)[Q(\lambda)], &\lambda \in 2K', \\
	-\sgn(\lambda)2^m g_{Q(\lambda)}^m[Q(\lambda)], & \lambda \notin 2K'.
	\end{cases}
	\end{align*}
	Note that we have $g_{Q(\lambda)}^m[Q(\lambda)] = g^m[Q(\lambda)]=f^m[4Q(\lambda)]$. Hence, after a short computation the product in \eqref{eq product expansion} can be written as
	\begin{align*}
	\prod_{\substack{\lambda \in K' \\ (\lambda,C) > 0 }}\left(\frac{1-e(2(\lambda,Z))}{(1-e((\lambda,Z)))^2}\right)^{\sgn(\lambda)2^{m-1} f^m[4Q(\lambda)]} = \prod_{\substack{\lambda \in K' \\ (\lambda,C) > 0 }}\left(\frac{1+e((\lambda,Z))}{1-e((\lambda,Z))}\right)^{\sgn(\lambda)2^{m-1} f^m[4Q(\lambda)]}.
	\end{align*}
	By \cite[Lemma~3.2]{bruinierhabil}, the condition $(\lambda,C) > 0$ may be replaced by $(\lambda,v) > 0$ for any $v \in C$. For example, we may choose $v = (1,0,\dots,0,1) \in C$, so the condition $(\lambda,C) > 0$ becomes $\alpha + \gamma > 0$. Plugging in $(\lambda,Z) = \alpha \tau + \gamma w+\beta_1 z_1/2+\ldots+\beta_m z_m/2$ for $Z = (w,-z/2,\tau) \in \mathcal{H}_{m+1}$, $4Q(\lambda) = \alpha \gamma-\beta_1^2-\ldots-\beta_m^2$, $\sgn(\lambda) = (-1)^{(\alpha+1)(\gamma+1)}$, and $f = 1/\theta$, we obtain the product expansion given in Theorem~\ref{theorem O2m Borcherds product}.
\end{proof}

\section{Theta constants on $O(2,m+2)$ as Borcherds products}
\label{section theta constants}

We now show that, for $m =  1,2,3$, the Borcherds product $\Psi_m(Z)$ from Theorem~\ref{theorem O2m Borcherds product} is given by a theta constant on the degree $2$ Siegel, hermitian, or quaternionic upper half-space, respectively.

We let $m=1,2,$ or $3$, and we let $\mathcal{O}_m$ be equal to $\Z,\Z[i]$, or $\Z[i,j,k] \subset \mathbb{H}$, respectively. As before, we let $\mathcal{H}_{m+1}$ be the upper half-space model of the hermitian symmetric domain for $O(2,m+2)$, and we denote its variables by $Z = (w,-z,\tau)$ with $\tau,w \in \mathcal{H}$ and $z = (z_1,\dots,z_m) \in \C^m$. We define the theta constant
\[
\Theta_m(Z) = \sum_{\lambda,\mu \in \mathcal{O}_m}e(|\lambda|^2\tau/2 + \tr(\lambda z\overline{\mu})/2 + |\mu|^2 w/2),
\]
which can be viewed as a degree $2$ Siegel theta constant of weight $1/2$ if $m = 1$ (see \cite[Satz I.3.2]{freitagsiegel}), as a degree $2$ hermitian theta constant of weight $1$ if $m = 2$ (see \cite[Satz III.1]{freitaghermitian}), and as a degree $2$ quaternionic theta constant of weight $2$ if $m = 3$ (see \cite[Proposition 10.4]{freitaghermann}; see also the book \cite{kriegquaternionic} for a comprehensive treatment of theta functions on quaternionic half-spaces). The theta constant is related to the Borcherds product $\Psi_m$ as follows.

\begin{proposition}\label{proposition theta constants} We have
	\begin{align*}
		\Theta_1(Z) = \Psi_1(Z), \qquad
		\Theta_2(Z) = \Psi_2(Z),\qquad
		\Theta_3(Z)|_{z_4 = 0} = \Psi_3(Z).
	\end{align*}
\end{proposition}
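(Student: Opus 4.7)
The plan is to show that for each $m \in \{1,2,3\}$, both $\Psi_m$ and $\Theta_m$ (resp.\ $\Theta_3|_{z_4 = 0}$ for $m = 3$) are holomorphic, nonvanishing automorphic forms of the same weight on $\mathcal{H}_{m+1}$. The ratio $\Psi_m / \Theta_m$ is then a holomorphic automorphic function of weight zero, hence constant by Koecher's principle; a comparison of leading Fourier coefficients at the cusp then forces the constant to be $1$.

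First I would match weights and domains. By Theorem~\ref{theorem O2m Borcherds product}, $\Psi_m$ has weight $2^{m-2}$, and the references cited in the excerpt give $\Theta_m$ weight $1/2$, $1$, $2$ for $m = 1, 2, 3$, respectively, agreeing with $2^{m-2}$. The arithmetic groups of the Siegel, hermitian, and quaternionic degree-$2$ half spaces embed into $O(2, m+2)$ under the classical identifications of hermitian symmetric domains, so $\Theta_m$ descends to an automorphic form on $\mathcal{H}_{m+1}$. For $m = 3$ the quaternionic theta naturally has a fourth $z$-variable, and restricting to $z_4 = 0$ pulls it back to the $O(2,5)$-domain $\mathcal{H}_4$.

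Next, I would verify nonvanishing of $\Psi_m$ using Borcherds' theorem together with Proposition~\ref{proposition Borcherds input}: the principal part of $F_m$ is supported on $\e_\mu$ and equals the principal part of $h^m$, which for $m \leq 3$ consists solely of $q^{-m/4}$. A zero of $\Psi_m$ would therefore correspond to some $\lambda = (\alpha/4, \beta_1/2, \dots, \beta_m/2, \gamma/4) \in \mu + K$ with $\alpha, \gamma \equiv 2 \pmod 4$, $\beta_i$ odd, and $\alpha\gamma - \beta_1^2 - \dots - \beta_m^2 = -m$. Reducing modulo $8$ yields $\alpha\gamma \equiv 4$ and $\sum \beta_i^2 \equiv m$, so the left side is $\equiv 4 - m \not\equiv -m \pmod 8$, and no such $\lambda$ exists. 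Hence $\Psi_m$ is holomorphic and nonvanishing on $\mathcal{H}_{m+1}$. Nonvanishing of $\Theta_m$ on the interior of its domain is classical in the Siegel case (even theta nullwerte with trivial characteristic do not vanish on $\mathcal{H}_2$), and analogous arguments cover the hermitian and quaternionic settings.

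With both functions holomorphic and nonvanishing of matching weight, the quotient is a holomorphic weight-zero automorphic function, hence constant by Koecher's principle. As $\Im \tau, \Im w \to \infty$ both sides tend to $1$: the theta series because only the $\lambda = \mu = 0$ term survives in the limit, and the Borcherds product because the Weyl vector is trivial (as shown in the proof of Theorem~\ref{theorem O2m Borcherds product}). The main obstacle is cleanly establishing nonvanishing of $\Theta_m$ in the hermitian and quaternionic cases; an alternative route is to compute the divisor of $\Theta_m$ directly from the theta series representation and match it against the empty divisor of $\Psi_m$, bypassing the need for general nonvanishing theorems.
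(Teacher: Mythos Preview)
Your central claim---that $\Psi_m$ and $\Theta_m$ are nonvanishing on $\mathcal{H}_{m+1}$---is false, and the argument collapses without it. The mod-$8$ computation you give is precisely the argument from the proof of Theorem~\ref{theorem O2m Borcherds product} showing that $K_{\mu,-m/4}=\emptyset$, i.e.\ that there are no vectors of norm $-m/4$ in $K+\mu$. But that statement controls the Weyl-chamber walls inside the positive cone $C\subset K\otimes\R$, not the divisor of the Borcherds product on $\mathcal{H}_{m+1}$. The divisor of $\Psi_m$ is the Heegner divisor $Z(\mu,-m/4)$, which is cut out by negative-norm vectors in $L+\mu$ with $L=U\oplus K$, not in $K+\mu$. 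Because $U$ is a hyperbolic plane you can always adjust the norm by an integer: for instance
\[
\lambda=\big((1,-1),\,(\tfrac12,\tfrac12,\dots,\tfrac12)\big)\in L+\mu,\qquad Q(\lambda)=-1+1-\tfrac{m}{4}=-\tfrac{m}{4},
\]
so $Z(\mu,-m/4)\neq\emptyset$ and $\Psi_m$ genuinely has zeros. Correspondingly, the assertion that the genus-$2$ Siegel theta constant with trivial characteristic is nonvanishing on $\mathcal{H}_2$ is incorrect: its zero locus is a Humbert surface, and indeed Hermann's theorem quoted in the introduction describes the CM zeros of $\vartheta_A$ that arise exactly by restricting this divisor.

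The ratio strategy can be repaired only if you independently determine $\mathrm{div}(\Theta_m)$ and show it equals $Z(\mu,-m/4)$; for $m=1,2$ this is essentially what the references to Hermann contain (and is why the paper simply cites those cases as known). For $m=3$ the paper takes a different route: it matches the two forms along the one-dimensional boundary components, so that $\Theta_3|_{z_4=0}-\Psi_3$ is a cusp form of weight $2$ for a suitable $\Gamma\subset\mathrm{O}^+(L)$. Weight $2$ is no longer singular for $\mathrm{O}(2,5)$, so this does not vanish automatically; the paper rules out a nonzero cusp form by a Fourier--Jacobi argument using the Eichler--Zagier generators $\phi_{0,1},\phi_{-2,1}$. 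Your outline neither matches divisors nor addresses this cusp-form obstruction, so as written it does not establish the $m=3$ case.
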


\begin{proof}
For $m = 1$ and $m = 2$ this is well-known (compare \cite{hermann, hermannhermitian}, for example), so we focus on the case $m = 3$ here. It is also possible to prove the identities for $m=1$ and $m=2$ using a slight variation of this argument. 

Since $\Psi_3$ has an $\mathrm{O}^+(L)$-invariant divisor that does not contain the mirror of any reflections, it defines a modular form for a finite-order character $\chi$ on $\mathrm{O}^+(L)$ which is trivial on all reflections. Let $\Gamma := \mathrm{ker}(\chi)$. The values of $\Psi_3$ at the one-dimensional cusps of $\Gamma$ are Borcherds products of weight two on $U \oplus U(4)$ and therefore modular forms on a subgroup of $\mathrm{SL}_2(\mathbb{Z}) \times \mathrm{SL}_2(\mathbb{Z})$, and to prove that they coincide with the values of $\Theta_3(Z)|_{z_4=0}$ it is enough to check that a few Fourier coefficients match. Therefore $$F(Z) := \Theta_3(Z)|_{z_4=0} - \Psi_3(Z)$$ is a cusp form of weight two under $\Gamma$. 

If $m$ were $1$ or $2$ then this would immediately imply the identity, as a cusp form of singular weight is identically zero. In our case we must argue more carefully that $S_2(\Gamma) = 0$.

Consider the Fourier--Jacobi expansion, $$F(\tau, z, w) = \sum_{n=N}^{\infty} f_n(\tau, z) e^{2\pi i n w},$$ and suppose $N$ is minimal with $f_N \ne 0$. Then $f_N$ is a Jacobi cusp form of weight two, level $\Gamma_0(4)$ and lattice index $3A_1(N)$. In view of the expansion of $F$ about the cusps, and the fact that $\sqrt{\Delta(2\tau)} = q - 12q^3 + 54q^5 \pm ...$ generates all cusp forms of level $\Gamma_0(4)$ as a principal ideal, we find that $f_N / \Delta(2\tau)^{N/2}$ is a weak Jacobi form of weight $2-6N$ and level $\Gamma_0(4)$, and therefore lies in the algebra $$f_N(\tau, z_1,z_2,z_3) \in M_*(\Gamma_0(4))[\phi_{0,1}(\tau, z_i), \phi_{-2,1}(\tau, z_i), \; i=1,2,3],$$ where $\phi_{0,1}$ and $\phi_{-2,1}$ are the basic weak Jacobi forms defined by Eichler--Zagier \cite[Theorem~9.3]{eichlerzagier}. Further, since $F$ is symmetric under the reflections that swap any variables $z_i, z_j$, it follows that $f_N$ is a symmetric polynomial with respect to $z_i$; this determines $f_N$ as a linear combination $$\frac{f_N}{\Delta(2\tau)^{N/2}} = \Psi_{-6}^N \cdot \Big( A \cdot \Psi_{-4} + B \cdot \Psi_{-6} \Big),$$ where $$\Psi_{-6}(\tau, z_1, z_2, z_3) =  \phi_{-2,1}(\tau, z_1)\phi_{-2, 1}(\tau, z_2) \phi_{-2, 1}(\tau, z_3)$$ and \begin{align*} \Psi_{-4}(\tau, z_1, z_2, z_3) &= \phi_{-2,1}(\tau, z_1)\phi_{-2, 1}(\tau, z_2) \phi_{0, 1}(\tau, z_3) + \phi_{-2,1}(\tau, z_1)\phi_{0, 1}(\tau, z_2) \phi_{-2, 1}(\tau, z_3) \\ & \quad + \phi_{0,1}(\tau, z_1)\phi_{-2, 1}(\tau, z_2) \phi_{-2, 1}(\tau, z_3),\end{align*} where $A \in\mathbb{C}$, and where $B \in M_2(\Gamma_0(4))$. However, no such linear combination yields a cusp form $f_N$, which is a contradiction.
\end{proof}

Let us now discuss a generalization of Theorem~\ref{theorem product expansion intro} to theta functions associated with integral binary hermitian forms over $\Z[i] \subset \C$ and $\Z[i,j,k] \subset \mathbb{H}$. For $a,c \in \N$ and $b = b_1+ib_2 \in \Z[i]$ with $D = |b|^2-4ac < 0$ we consider the positive definite, binary hermitian form
\begin{align}\label{binary hermitian form}
A(x,y) = a|x|^2 + \tfrac{1}{2}\tr(xb\overline{y})+ c|y|^2 \qquad (x,y \in \C).
\end{align}
By splitting $x = x_1 + ix_2$ and $y = y_1 + iy_2$ into their real and imaginary parts, we obtain the positive definite, integral quaternary quadratic form
\[
A(x_1,x_2,y_1,y_2) = a(x_1^2+x_2^2) + b_1(x_1 y_1 + x_2 y_2) + b_2(x_1 y_2 - x_2 y_1) + c(y_1^2+y_2^2)
\]
of discriminant $D^2$ and level dividing $|D|$. Hence, the corresponding quaternary theta function 
\[
\vartheta_{A}(\tau) = \sum_{x,y \in \Z[i]}q^{A(x,y)} = \sum_{x_1,x_2,y_1,y_2 \in \Z}q^{A(x_1,x_2,y_1,y_2)}
\]
is a modular form of weight $2$ for $\Gamma_0(|D|)$ with trivial nebentypus.

Similarly, for $a,c \in \N$ and $b = b_1+ib_2+b_3 j + b_4 k \in \Z[i,j,k]$ with $D = |b|^2-4ac < 0$, equation \eqref{binary hermitian form} defines a positive definite, binary hermitian form $A(x,y)$ over the quaternions $\mathbb{H}$. Splitting $x$ and $y$ into their four components, we obtain a positive definite, octonary quadratic form and a corresponding weight $4$ theta function for $\Gamma_0(|D|)$.

By evaluating the theta constant $\Theta_2(Z) = \Psi_2(Z)$ at $Z = (2a\tau,b_1 \tau,b_2 \tau,2c\tau)$ (and similarly for $\Theta_3(Z)|_{z_4 = 0}$) we obtain the following analog of Theorem~\ref{theorem product expansion intro} for the above weight $2$ and $4$ theta functions.

\begin{theorem}\label{theorem product expansion hermitian intro}
	\begin{enumerate}
		\item Let $A = [a,b,c]$ be a positive definite, integral binary hermitian form over $\Z[i]$ as above. For $\Im(\tau)$ large enough the associated weight $2$ quaternary theta function $\vartheta_A$ has a product expansion as in \eqref{product expansion}, but with the exponents
	\[
	d_A(n) = \!\!\!\!\sum_{\substack{\alpha, \beta_1,\beta_2, \gamma \in \Z \\ \alpha a + \beta_1 b_1 + \beta_2 b_2 + \gamma c  = n}}\!\!\!\!(-1)^{(\alpha+1)(\gamma+1)}c_{2/\theta^2}(\alpha\gamma-\beta_1^2-\beta_2^2).
	\]
	\item Let $A = [a,b,c]$  be a positive definite, integral binary hermitian form over $\Z[i,j,k]$ as above. Write $b = b_1 + b_2 i + b_3 j + b_4 k$ and assume that $b_4 = 0$. For $\Im(\tau)$ large enough the associated weight $4$ octonary theta function $\vartheta_A$ has a product expansion as in \eqref{product expansion}, but with the exponents
	\[
	d_A(n) = \!\!\!\!\sum_{\substack{\alpha, \beta_1,\beta_2,\beta_3, \gamma \in \Z \\ \alpha a + \beta_1 b_1 + \beta_2 b_2+\beta_3 b_3 + \gamma c  = n}}\!\!\!\!(-1)^{(\alpha+1)(\gamma+1)}c_{4/\theta^3}(\alpha\gamma-\beta_1^2-\beta_2^2-\beta_3^2).
	\]
	\end{enumerate}
\end{theorem}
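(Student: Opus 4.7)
My plan is to follow the template of the $m=1$ argument given after Theorem~\ref{theorem product expansion intro}: specialize the theta-constant-equals-Borcherds-product identities of Proposition~\ref{proposition theta constants} to a one-dimensional $\tau$-slice so that the left-hand side collapses to $\vartheta_A(\tau)$ and the right-hand side collapses to the claimed infinite product. The two parts of the theorem require the identities $\Theta_2(Z)=\Psi_2(Z)$ and $\Theta_3(Z)|_{z_4=0}=\Psi_3(Z)$ respectively, together with the explicit product expansion of $\Psi_m$ from Theorem~\ref{theorem O2m Borcherds product}.

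For part (1), I would substitute $Z=(2a\tau,\,b_1\tau,\,b_2\tau,\,2c\tau)\in\mathcal{H}_3$ into $\Theta_2(Z)=\Psi_2(Z)$. On the theta side, the hermitian variable $z$ becomes $b\tau$ with $b=b_1+ib_2\in\Z[i]$, and the three exponent pieces $|\lambda|^2\tau'/2$, $\tfrac{1}{2}\tr(\lambda z\overline{\mu})$, $|\mu|^2 w/2$ combine to $\tau\cdot A(\lambda,\mu)$, so the sum becomes $\vartheta_A(\tau)$. On the product side, each monomial $q^{\alpha/2}r_1^{\beta_1}r_2^{\beta_2}s^{\gamma/2}$ becomes $\tilde q^{\alpha a+\beta_1 b_1+\beta_2 b_2+\gamma c}$ with $\tilde q=e^{2\pi i\tau}$. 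Applying \cite[Lemma~3.2]{bruinierhabil} with $v=(2a,b_1,b_2,2c)$, which lies in the positive cone $C$ because $Q(v)=16ac-|b|^2=-4D-|b|^2>0$ and $2a>0$, lets us replace the condition $\alpha+\gamma>0$ by $\alpha a+\beta_1 b_1+\beta_2 b_2+\gamma c>0$. Collecting factors by the common value $n$ of this linear form yields the infinite product with exponents $d_A(n)$ as stated.

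Part (2) proceeds in the same way starting from $\Theta_3(Z)|_{z_4=0}=\Psi_3(Z)$. Here $\Theta_3$ carries a quaternionic variable $z=z_1+iz_2+jz_3+kz_4$ while $\Psi_3$ only sees $(z_1,z_2,z_3)$, so the identity lives on the slice $z_4=0$. This is precisely why the hypothesis $b_4=0$ is imposed: it guarantees that the specialization $z=b\tau=(b_1+ib_2+jb_3)\tau$ lies in this slice, making the product formula for $\Psi_3$ available. The rest of the argument (identification of the linear form, the positive-cone step with $v=(2a,b_1,b_2,b_3,2c)\in C$, and regrouping by $n$) is identical to case (1) and produces the exponents $d_A(n)$ built from $c_{4/\theta^3}$.

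The main subtlety, rather than a serious obstacle, is the bookkeeping at the specialization: one must check that $v=(2a,b_1,\dots,b_m,2c)$ really lies in $C$ (which ultimately rests on $A$ being positive definite, so that $4ac-\sum b_j^2>0$) and that the absolute convergence of the Borcherds product valid for $\Im(Z)$ deep in the positive cone survives the restriction to the one-parameter subspace for $\Im(\tau)$ sufficiently large. Because $\Psi_2$ and $\Psi_3$ are holomorphic by Theorem~\ref{theorem O2m Borcherds product} (the case $m\leq 8$) and the Weyl chamber is the full positive cone, no poles or chamber-dependence complicate the argument; only the $b_4=0$ hypothesis in the quaternionic case, which is forced by the slice on which Proposition~\ref{proposition theta constants} places $\Theta_3=\Psi_3$, warrants highlighting.
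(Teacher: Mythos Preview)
Your proposal is correct and follows exactly the paper's approach: specialize the identities of Proposition~\ref{proposition theta constants} along the one-parameter slice $Z=A\tau$, identify the theta side with $\vartheta_A(\tau)$, use \cite[Lemma~3.2]{bruinierhabil} to replace the cone condition by the linear form in $\alpha,\beta_j,\gamma$, and regroup by $n$. The only blemish is the intermediate equality $16ac-|b|^2=-4D-|b|^2$, which is false in general (you should have $16ac-|b|^2=-4D+3|b|^2$, or simply note $16ac-|b|^2>4ac-|b|^2=-D>0$); the conclusion $v\in C$ is unaffected.
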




Let us look at two examples.

\begin{example}\label{example root lattices}
	\begin{enumerate}
		\item The binary hermitian form $A = [a,b,c] = [1,1+i,1]$ over $\Z[i]$ has discriminant $-2$, and the corresponding quaternary quadratic form is the $D_4$ root lattice, which has discriminant $4$ and level $2$. Therefore, the corresponding theta function is a modular form of weight $2$ for $\Gamma_0(2)$, and is given by the Eisenstein series $\vartheta_{D_4}(\tau) = 2 E_2(2\tau) - E_2(\tau)$. From Theorem~\ref{theorem product expansion hermitian intro} we obtain the product expansion
	\[
	\vartheta_{D_4}(\tau) = \prod_{n = 1}^\infty\left(\frac{1+q^n}{1-q^n} \right)^{d(n)}, \quad d(n) = \!\!\!\!\sum_{\substack{\alpha, \beta_1,\beta_2 \gamma \in \Z \\ \alpha + \beta_1 + \beta_2 + \gamma  = n}}\!\!\!\!(-1)^{(\alpha+1)(\gamma+1)}c_{2/\theta^2}(\alpha\gamma-\beta_1^2-\beta_2^2).
	\]

		\item The binary hermitian form $A = [a,b,c] = [1,1+i+j,1]$ over $\Z[i,j,k]$ has discriminant $-1$, and the corresponding octonary quadratic form is the $E_8$ root lattice. The associated octonary theta function $\vartheta_{A} = \vartheta_{E_8}$ is a holomorphic modular form of weight $4$ for $\SL_2(\Z)$. Hence, it is equal to the normalized Eisenstein series $E_4$. From Theorem~\ref{theorem product expansion hermitian intro} we obtain the product expansion
	\[
	\vartheta_{E_8}(\tau) = \prod_{n = 1}^\infty\left(\frac{1+q^n}{1-q^n} \right)^{d(n)}, \quad d(n) = \!\!\!\!\sum_{\substack{\alpha, \beta_1,\beta_2,\beta_3, \gamma \in \Z \\ \alpha + \beta_1 + \beta_2+\beta_3 + \gamma  = n}}\!\!\!\!(-1)^{(\alpha+1)(\gamma+1)}c_{4/\theta^3}(\alpha\gamma-\beta_1^2-\beta_2^2-\beta_3^2).
	\]
	Although it is well-known that $\vartheta_{E_8} = E_4$ is a Borcherds product (see \cite{borcherds95}), this particular product expansion seems to be new.
	\end{enumerate}
\end{example}

\section{Binary theta functions as Borcherds products - The proof of Theorem~\ref{theorem Borcherds products intro}}
\label{section theta functions Borcherds products}


We let $m=1,2,$ or $3$, and we let $A = [a,b,c]$ be a positive definite, binary quadratic or hermitian form over $\Z$, $\Z[i]$ or $\Z[i,j,k]$ (with $b_4 = 0$ in the last case) as in the last section. In particular, we have $a,c \in \N$ and either $b = b_1 \in \Z$, $b = b_1 + ib_2 \in \Z[i]$, or $b = b_1 + b_2 i + b_3j \in \Z[i,j,k]$.

We consider the vector
\[
A = (a,b_1,\dots,b_m,c) \in K, \qquad Q(A) = 4ac - |b|^2 = |D|.
\]
We define the positive definite rank $1$ and negative definite rank $m+1$ sublattices
\[
P = \Z A, \quad N = A^\perp \cap K,
\]
where $A^\perp$ denotes the orthogonal complement of $A$ in $K \otimes \R$. Clearly, we have $P \cong A_1(|D|)$.

\begin{remark}
	For $m = 1$ the lattice $N(-1) = (N,-Q(x))$ is positive definite of rank $2$, and is related to the binary quadratic form $A$ as follows. If $D$ is odd, then $N(-1)$ is isomorphic to the binary quadratic form $A^2(2x,2y)$, where $A^2$ is the square of $A$ with respect to Gauss composition. If $D$ is even, then $N(-1)$ is isomorphic to $A^2(x,2y)$. This can be checked by a direct computation after choosing a basis of $N(-1)$.
\end{remark}

Let $F_m$ be the weakly holomorphic modular form of weight $-m/2$ for $\rho_L$ constructed in Proposition~\ref{proposition Borcherds input}. Since $L'/L \cong K'/K$, we can view $F_m$ as a modular form for $\rho_K$. Note that $P \oplus N$ is a sublattice of $K$ of finite index, and we have $K' \subset (P \oplus N)'$. Hence, we may view $F_m$ as a modular form $F_{m,P \oplus N}$ of weight $-m/2$  for $
\rho_{P \oplus N}$ by setting
\[
F_{m,P \oplus N, \beta} = \begin{cases}
F_{m,\beta}, & \text{if } \beta \in K'/(P\oplus N), \\
0, & \text{otherwise}. 
\end{cases}
\]
We refer the reader to \cite[Section~2.2]{ma} for more details on this construction. We have the following Borcherds product expansion for the theta function $\vartheta_A$.

\begin{theorem}\label{theorem Borcherds products}
	The binary/quaternary/octonary theta function $\vartheta_A(\tau)$ is the Borcherds product associated to the weakly holomorphic modular form
	\begin{align}\label{definition FA}
	F_A = \sum_{\beta \in P'/P}\left(\sum_{\alpha \in N'/N}F_{m,P \oplus N, \alpha+\beta}
	\Theta_{N(-1),\alpha}\right)\e_\beta\in M_{1/2,P}^!,
	\end{align}
	where 
	\[
	\Theta_{N(-1)} = \sum_{\alpha \in N'/N}\Theta_{N(-1),\alpha}\e_\alpha = \sum_{\alpha \in N'/N}\sum_{x \in N + \alpha}q^{-Q(x)}\e_\alpha \in M_{(1+m)/2,N(-1)}
	\]
	is the holomorphic vector-valued binary theta function associated to $N(-1)$. In particular, for $\Im(\tau)$ large enough, the theta function $\vartheta_A(\tau)$ has the Borcherds product expansion
	\[
	\vartheta_A(\tau) = \prod_{n \geq 1}(1-q^n)^{c_{A}(n,n^2/4|D|)},
	\]
	where we identified $P'/P \cong \Z/2|D|\Z$, and we wrote $F_A = \sum_{h (2|D|)}\sum_{\ell \in \Z + h^2/4|D|}c_A(h,\ell)q^\ell \e_h$ for the Fourier expansion of $F_A$. Explicitly, we have
	\[
	c_A(n,n^2/4|D|) = \sum_{\substack{\lambda \in K' \\ (\lambda,A) = n}}c_{F_m}(\lambda,Q(\lambda)).
	\]
\end{theorem}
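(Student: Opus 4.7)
\emph{Plan.} My plan is to derive the theorem from the compatibility of Borcherds products with restriction to a sub-Grassmannian. By Proposition~\ref{proposition theta constants} together with the Hermann substitution $Z = Z_A(\tau)$ used in Section~\ref{section theta constants}, we already have $\vartheta_A(\tau) = \Psi_m(Z_A(\tau))$. Inserting this substitution into the product expansion of $\Psi_m$ provided by Theorem~\ref{theorem O2m Borcherds product} converts each factor $1-e((\lambda, Z))$ into $1-q^{(\lambda, A)}$; regrouping the factors by $n = (\lambda, A) > 0$ then gives
\[
\vartheta_A(\tau) = \prod_{n \geq 1}(1-q^n)^{\sum_{\lambda \in K',\, (\lambda, A) = n} c_{F_m}(\lambda, Q(\lambda))}.
\]
It then remains to identify this product with the Borcherds product associated with the vector-valued form $F_A$ defined in \eqref{definition FA}.

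\emph{Modularity and matching of Fourier coefficients.} Modularity of $F_A$ follows from the standard theta-contraction construction described in \cite[Section~2.2]{ma}: view $F_m$ as a form for the finite-index sublattice $P \oplus N \subset K$ by extending by zero off $K'/(P \oplus N) \subset (P \oplus N)'/(P \oplus N)$, and then contract against the holomorphic $\rho_{N(-1)}$-valued binary theta series $\Theta_{N(-1)}$. Since $\Theta_{N(-1)}$ has weight $(1+m)/2$, one obtains a weight $-m/2 + (1+m)/2 = 1/2$ element $F_A \in M^!_{1/2, P}$. Borcherds' theorem applied to $F_A$ then yields a meromorphic modular form on $U \oplus P$ (of signature $(2,1)$, so essentially on $\Gamma_0(|D|) \backslash \mathcal{H}$) with product expansion $\Psi_{F_A}(\tau) = q^{\rho_{F_A}}\prod_{n \geq 1}(1-q^n)^{c_{F_A}(n \bmod 2|D|,\, n^2/4|D|)}$. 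To match the exponents, I use the orthogonal decomposition $\lambda = \lambda_P + \lambda_N$ with $\lambda_P = \frac{(\lambda, A)}{2|D|} A \in P'$ and $\lambda_N \in N'$: one has $Q(\lambda_P) = n^2/4|D|$ (where $n = (\lambda, A)$), and $Q(\lambda_N) = -Q(x)$ for $x \in N + \alpha$. The sum over $\lambda_N$ with fixed class $\alpha \in N'/N$ then reproduces exactly the Fourier coefficient of the product $F_{m, P \oplus N, \alpha + h} \cdot \Theta_{N(-1), \alpha}$, yielding
\[
c_{F_A}(n \bmod 2|D|,\, n^2/4|D|) = \sum_{\substack{\lambda \in K' \\ (\lambda, A) = n}} c_{F_m}(\lambda, Q(\lambda)),
\]
which matches the exponent obtained above.

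\emph{Main obstacle.} After these computations, the two product expansions for $\vartheta_A$ and for $\Psi_{F_A}$ agree up to the possible Weyl prefactor $q^{\rho_{F_A}}$. Comparing leading Fourier coefficients at $\infty$ (using $\vartheta_A = 1 + O(q)$ and $\prod_n (1-q^n)^{\ast} = 1 + O(q)$) forces $\rho_{F_A} = 0$, so that $\vartheta_A = \Psi_{F_A}$ on the nose. The technical heart of the argument is making this pullback formula rigorous: one must check that the positive Weyl chamber of $F_m$ from the proof of Theorem~\ref{theorem O2m Borcherds product} restricts to a Weyl chamber of $F_A$ at the Hermann cusp (so that the factor-by-factor comparison takes place inside a single chamber of convergence), and that the mod-$8$ obstruction argument used to kill the Weyl vector of $F_m$ propagates through the contraction with $\Theta_{N(-1)}$ to the principal part of $F_A$. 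Once these points are settled, the explicit Borcherds product formula and the Fourier coefficient identity in the theorem follow immediately.
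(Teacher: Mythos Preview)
Your approach is essentially the paper's: restrict the product expansion of $\Psi_m$ along $Z=A\tau$, regroup by $n=(\lambda,A)$, and identify the resulting exponents with the Fourier coefficients of $F_A$ via the orthogonal splitting $\lambda=\lambda_P+\lambda_N$ (the paper packages this last step as a separate lemma). The one place you diverge is your ``main obstacle'': the paper does not separately apply Borcherds' theorem to $F_A$ and then compare, so it never needs to analyze the Weyl chambers or Weyl vector of $F_A$ on its own terms. Since the product \eqref{eq product expansion} for $\Psi_m$ already has trivial Weyl vector and a single chamber (this was the mod-$8$ argument in the proof of Theorem~\ref{theorem O2m Borcherds product}), its literal restriction to $Z=A\tau$ is the stated product with no prefactor, and the identification with $c_A(n,n^2/4|D|)$ is then purely a Fourier-coefficient computation; there is nothing further to ``propagate''.
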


\begin{remark} The theorem implies that the above binary/quaternary/octonary theta functions $\vartheta_A(\tau)$ have roots only at CM points in $\mathcal{H}$ that can be described explicitly (see also \cite{hermann,hermannhermitian}).
\end{remark}

In order to prove the above theorem, we first compute the Fourier expansion of the weight $1/2$ Borcherds input $F_A$ for $\vartheta_A$.

\begin{lemma}\label{lemma fourier expansion FA}
	For $h \in \Z/2|D|\Z$ and $\ell \in \Z + h^2/4|D|$, the $(h,\ell)$-th Fourier coefficient of $F_A$ is given by
	\[
	c_A(h,\ell) = \sum_{\substack{\lambda \in K' \\ (\lambda,A) = h}}c_{F_m}(\lambda,\ell+Q(\lambda)-h^2/4|D|),
	\]
	where we have identified $P'/P \cong \Z/2|D|\Z$, and where $F_m$ is the vector-valued modular form constructed in Proposition~\ref{proposition Borcherds input}.
\end{lemma}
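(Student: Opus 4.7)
The plan is to compute the $(h,\ell)$-th Fourier coefficient of $F_A$ directly from the definition~\eqref{definition FA}, and then to identify the resulting sum with the claimed sum over $\lambda \in K'$ satisfying $(\lambda, A) = h$, via the orthogonal decomposition $K \otimes \Q = (P \otimes \Q) \oplus (N \otimes \Q)$.

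First, under the identification $P'/P \cong \Z/2|D|\Z$ coming from $(A,A) = 2|D|$, the class $h \in \Z/2|D|\Z$ corresponds to $\beta = \tfrac{h}{2|D|} A \in P'$, and $Q(\beta) = h^2/(4|D|)$. By the definition of $F_{m, P\oplus N}$, the $\e_\beta$-component of $F_A$ equals
\[
\sum_{\substack{\alpha \in N'/N \\ \alpha+\beta \in K'/(P\oplus N)}} F_{m,\, \overline{\alpha+\beta}} \cdot \Theta_{N(-1),\alpha},
\]
where $\overline{\alpha+\beta}$ denotes the image of $\alpha+\beta$ under the natural map $K'/(P\oplus N) \to K'/K$. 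Expanding $\Theta_{N(-1),\alpha} = \sum_{x \in \alpha+N} q^{-Q(x)}$ and collapsing the double sum over $\alpha \in N'/N$ and $x \in \alpha+N$ into a single sum over $x \in N'$, the coefficient of $q^\ell$ becomes
\[
c_A(h,\ell) = \sum_{\substack{x \in N' \\ x+\beta \in K'}} c_{F_m}\bigl((x+\beta) \bmod K,\; \ell + Q(x)\bigr).
\]
Note that the condition $x + \beta \in K'$ depends only on $x \bmod N$, since $N \subset K \subset K'$, so the above rewriting is legitimate.

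Next, any $\lambda \in K'$ has an orthogonal decomposition $\lambda = \lambda_P + \lambda_N$ with $\lambda_P \in P'$, $\lambda_N \in N'$, and $Q(\lambda) = Q(\lambda_P) + Q(\lambda_N)$. Writing $\lambda_P = \tfrac{k}{2|D|} A$ gives $(\lambda, A) = k$, so the condition $(\lambda, A) = h$ is equivalent to $\lambda_P = \beta$. The map $x \mapsto \lambda := x + \beta$ therefore establishes a bijection
\[
\{\, x \in N' : x + \beta \in K' \,\} \;\longleftrightarrow\; \{\, \lambda \in K' : (\lambda, A) = h \,\},
\]
under which $Q(x) = Q(\lambda) - Q(\beta) = Q(\lambda) - h^2/(4|D|)$ and $(x + \beta) \bmod K$ corresponds to $\lambda \bmod K$. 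Substituting into the formula above yields the identity stated in the lemma.

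The only delicate step is the bookkeeping of cosets in the merger of the $\alpha$- and $x$-sums, together with the verification that the orthogonal decomposition interacts correctly with the reduction modulo $K$; once these points are in place, the identity is a direct consequence of the definitions and the Fourier expansion of $\Theta_{N(-1)}$.
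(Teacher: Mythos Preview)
Your proof is correct and follows essentially the same route as the paper's: expand the definition of $F_A$ at the component $\beta=\tfrac{h}{2|D|}A$, collapse the double sum over $\alpha\in N'/N$ and $x\in \alpha+N$ into a single sum over $x\in N'$ with $x+\beta\in K'$, and then substitute $\lambda=x+\beta$ using the orthogonal splitting $K'\subset P'\oplus N'$. The only difference is presentational---you spell out the bijection and the coset bookkeeping a bit more explicitly than the paper does.
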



\begin{proof}
	The $\ell$-th coefficient of the $\beta$-component of $F_A$ is given by
	\begin{align*}
	\sum_{\substack{\alpha \in N'/N \\ \alpha + \beta \in K'}} \sum_{x \in N+\alpha} c_{F_m}(\alpha+\beta,\ell+Q(x)) &= \sum_{\substack{x \in N' \\ x+\beta \in K'}}c_{F_m}(x+\beta,\ell+Q(x)) \\
	&= \sum_{\substack{x \in K'-\beta \\ (x,A) = 0}}c_{F_m}(x+\beta,\ell+Q(x)).
	\end{align*}
	If we represent $\beta \in P'/P$ by $\frac{h}{2|D|}A$ with $h \in \Z/2|D|\Z$, and write $x = \lambda-\beta$ with $\lambda \in K'$, then we have $(\lambda,A) = (\beta,A) = h$ and $Q(x) = Q(\lambda)-h^2/4|D|$. This yields the stated formula.
\end{proof}

\begin{proof}[Proof of Theorem~\ref{theorem Borcherds products}]
	We know from Proposition~\ref{proposition theta constants} that $\vartheta_A(\tau)$ is the restriction to $Z = A\tau$ of the Borcherds product $\Psi_m(Z)$ from Theorem~\ref{theorem O2m Borcherds product}. By plugging in $Z = A\tau = (2a\tau,b_1\tau,\dots,b_m \tau,2c\tau)$ in the product expansion \eqref{eq product expansion} of $\Psi_m(Z)$, we obtain 
	\[
	\vartheta_A(\tau) = \Psi_m(A\tau) =  \prod_{\substack{\lambda \in K' \\ (\lambda,A) > 0}}\left(1-e((\lambda,A)\tau)\right)^{c_{F_m}(\lambda,Q(\lambda))} = \prod_{n = 1}^\infty (1-q^n)^{C(n)}
	\]
	where
	\[
	C(n) = \sum_{\substack{\lambda \in K' \\ (\lambda,A) = n}}c_{F_m}(\lambda,Q(\lambda)).
	\]
	Using the Fourier expansion of $F_A$ from Lemma~\ref{lemma fourier expansion FA}, we see that $C(n) = c_A(n,n^2/4|D|)$. This yields the product expansion of $\vartheta_A$ given in Theorem~\ref{theorem Borcherds products} and finishes the proof of the theorem.
\end{proof}

\begin{remark} In the quaternary and octonary cases, the theta function can be written as a Kohnen--Shimura lift by a similar construction (with a similar proof). In the case $m = 2$, we define a Weil invariant (or modular form of weight $0$) for $\rho_L$: $$G_2 = \sum_{\lambda \in (K'/K)^+} \mathfrak{e}_{\lambda} - \sum_{\lambda \in (K'/K)^-} \mathfrak{e}_{\lambda},$$ where $(K'/K)^{\pm}$ consists of norm-zero cosets $(\alpha/4, \beta/2, \gamma/4)$ for which $\alpha \equiv \pm 1$ mod $4$ or $\gamma \equiv \mp 1$ mod $4$ (or both). In the case $m = 3$, we define a weight $1/2$ modular form $G_3$ by setting \begin{align*} G_3 &= \theta_{00} \cdot \Big( 3 \mathfrak{e}_0 - \sum_{\substack{\lambda \in K'/K \\ Q(\lambda) = 0 \\ \lambda \ne 0}} \mathfrak{e}_{\lambda} \Big) + \theta_{10} \cdot \Big( -3 \mathfrak{e}_{\mu} -\sum_{\substack{\lambda \in K'/K \\ \mathrm{ord}(\lambda) = 4 \\ Q(\lambda) = -1/4}} \mathfrak{e}_{\lambda} + \sum_{\substack{\lambda \in K'/K \\ \mathrm{ord}(\lambda) = 2 \\ \lambda \ne \mu \\ Q(\lambda) = -1/4}} \mathfrak{e}_{\lambda} \Big), \end{align*} where as before $\mu = (1/2,...,1/2) \in K'/K$, and where $\theta_{00}$ and $\theta_{10}$ are the thetanulls $$\theta_{00}(\tau) = \sum_{n \in \mathbb{Z}} q^{n^2} = 1 + 2q + 2q^4 + \dots, \quad \theta_{10} = \sum_{n \in \frac{1}{2} + \mathbb{Z}} q^{n^2} = 2q^{1/4} + 2q^{9/4} + \dots$$ The (additive) theta lifts of $G_2$ and $G_3$ can be shown to equal the theta constants $\Theta_2$ and $\Theta_3(Z)|_{z_4=0}$ by verifying that their images under the Witt operators match, as argued in the proof of Proposition~\ref{proposition theta constants}. Then we find that $\vartheta_A(\tau)$ is the Kohnen--Shimura lift of the (holomorphic) vector-valued modular form $$G_A = \sum_{\beta \in P'/P} \Big( \sum_{\alpha \in N'/N} G_{m, P\oplus N, \alpha+\beta} \Theta_{N(-1), \alpha} \Big) \mathfrak{e}_{\beta} \in M_{2m - 5/2, P},$$ where $G_{m, P\oplus N, \beta}$ is defined analogously to $F_{m, P\oplus N, \beta}$. \\ \

This remark does not apply directly to binary theta functions, which have odd weight and are not Shimura lifts in any usual sense. However, it is possible to realize the squares $\vartheta_A^2$ of binary theta series uniformly as Shimura lifts. This implies an ``additive'' analogue of Theorem~\ref{theorem product expansion intro}, namely the Lambert series expansion $$\vartheta_A(\tau)^2 = 1 + 4 \sum_{n=1}^{\infty} d_A(n) \frac{q^n}{1 - q^n},$$ valid for all binary theta series, where $$d_A(n) = \sum_{\substack{\alpha, \beta, \gamma \in \mathbb{Z} \\ \alpha a + \beta b + \gamma c = n}} \chi\Big(\mathrm{gcd}(\alpha,\beta,\gamma)\Big) c_{\theta}(\alpha \gamma - \beta^2)$$ for $A(x,y) = ax^2 + bxy + cy^2$, where we have written $$\theta(\tau) = \sum_{n \ge 0} c_{\theta}(n)q^n = 1 + 2q + 2q^4 + 2q^9 + ...$$ and where $\chi$ is the nontrivial Dirichlet character modulo $4$. \\

The powers $\vartheta_A^4$ and $\vartheta_A^8$ can also be realized uniformly as Shimura lifts, as the fourth and eighth powers of the Siegel theta constant can also be realized as additive theta lifts.
\end{remark}

\section{Theta functions that are not Borcherds products}
\label{section counterexamples}

We close with some remarks on (counter-)examples of other theta functions that are (or are not) Borcherds products. 

\subsection{Even theta constants} For a positive definite, integral binary quadratic form $A$ and an even characteristic $(\mu,\nu)$ (that is, $\mu,\nu \in \{0,1\}^2$ with $\mu^t \nu$ even), the same arguments as in the proof of Theorem~\ref{theorem product expansion intro} can be used to show that each of the ten even theta constants
\[
\vartheta_{A,\mu,\nu}(\tau) = \sum_{m,n \in \Z}(-1)^{\nu_1 m + \nu_2 n}q^{A(m+\mu_1/2,n+\mu_2/2)}
\]
is a Borcherds product. Indeed, $\vartheta_{A,\mu,\nu}(\tau)$ is the restriction of a degree $2$ Siegel theta constant, which is known to be a Borcherds product (see e.g. \cite{gritsenkonikulin, hermann, lippolt, opitzschwagenscheidt}). A similar result holds for the ten even Hermitian theta characteristics, as well as the ten nonzero restrictions of even quaternionic theta characteristics to the hypersurface $z_4 = 0$.
	
\subsection{Theta functions with non-CM roots} A Borcherds product has roots and poles only at CM points, but holomorphic theta functions in three or more variables often do not. For example, the theta function $\vartheta_\Lambda$ associated with the Leech lattice $\Lambda$ has roots only at non-CM points, and hence cannot be a Borcherds product. Indeed, since $\Lambda$ is unimodular of rank $24$ and has no $2$-roots, its theta function is the unique modular form of weight $12$ for $\SL_2(\Z)$ with Fourier expansion beginning $1 + O(q^2)$. Using $\Delta(\tau) = q + O(q^2)$ and $\Delta(\tau) j(\tau) = 1 + 720q + O(q^2)$, we obtain 
\[
\vartheta_{\Lambda}(\tau) = \Delta(\tau) \cdot\big( j(\tau) - 720 \big).
\]
It follows that $j(\tau_0) = 720$ for every zero $\tau_0$ of $\vartheta_{\Lambda}$.

If $\tau_0$ were a quadratic irrational then by complex multiplication theory the order $\mathcal{O} = \mathbb{Z} + \mathbb{Z}\tau_0$ has class number one. The Stark--Heegner theorem implies that $\tau_0$ falls into one of $13$ $\SL_2(\Z)$-orbits at which the $j$-invariant takes the following values: 
\begin{align*}
0, 12^3, -15^3, 20^3, -32^3, 2 \cdot (15)^3, 66^3, -96^3, -3 \cdot 160^3, 255^3, -960^3, -5280^3, -640320^3.
\end{align*}
Since $720$ does not appear in this list we have a contradiction, so $\tau_0$ is not a CM point.

	Using the same idea one can show that, among the theta functions associated to the other $23$ Niemeier lattices, only the one corresponding to the root systems $3 E_8$ and $D_{16} \oplus E_8$ (with theta function $E_4^3$) has roots only at CM points and is a Borcherds product; the others have simple roots only at non-CM points.
	
	The above argument can sometimes be applied to theta functions for $\Gamma_0(N)$, using the following idea. For a positive definite, even lattice $L$ of even rank $r$ and level $N$, the associated theta function $\vartheta_L$ is a modular form of weight $k=r/2$ for $\Gamma_0(N)$ with some nebentypus character. Hence the average
	\begin{align}\label{average}
	\prod_{M \in \Gamma_0(N) \backslash \SL_2(\Z)}\vartheta_L \big|_{k}M
	\end{align}
	is a modular form of weight $[\SL_2(\Z): \Gamma_0(N)]\cdot k$ for $\SL_2(\Z)$. The Fourier expansion of $\vartheta_L \big|_{k}M$ can be computed explicitly using the theta transformation formula, see \cite[\S 4.9]{miyake}. If we can show that this average has roots at non-CM points, then the same is true for $\vartheta_L$, so $\vartheta_L$ won't be a Borcherds product.
	
	As an example, we consider the positive definite, binary hermitian form
	\[
	A(x,y) = 2|x|^2 + \tr(x\overline{y}) + 2|y|^2 \qquad (x,y \in \C)
	\]
	over the ring of integers of $\Q(\sqrt{-3})$. The corresponding quaternary quadratic form has level $9$ and discriminant $81$, so its theta function $\vartheta_A = 1 + 18q^2 + 12q^3 + 36q^5 + \dots$ is a modular form of weight $2$ for $\Gamma_0(9)$. The average in \eqref{average} is a non-zero constant multiple of
	\[
	\Delta(\tau)^2 \cdot j(\tau) \cdot \big(j(\tau) + 1167051/512\big).
	\]
	Since $j(\tau)$ is an algebraic integer if $\tau$ is a CM point, the third factor has a root at a non-CM point, so $\vartheta_A$ cannot be a Borcherds product. This example also shows that quaternary theta functions coming from binary hermitian forms over imaginary quadratic fields are not Borcherds products in general, so the case of $\Z[i]$ discussed above seems to be rather special.

	\subsection{Theta functions that are not Fricke eigenforms} Another way to show that a theta function is not a Borcherds product on $\Gamma_0(N)$ is to use the fact that Borcherds products are eigenforms under the Fricke involution $W_N$. Let $L$ be a positive definite even lattice and let $L'$ be its dual lattice. Then, by the theta transformation formula, if $\vartheta_L$ is an eigenform under $W_N$ then the theta functions of $L$ and $L'(N)$ agree\footnote{In general, this does not imply that $L$ is isomorphic to $L'(N)$. Lattices with $L \cong L'(N)$ are called \emph{$N$-modular}, and their theta functions are eigenforms of the Fricke involution $W_N$, compare \cite{quebbemann}.}, where $L'(N) = (L',NQ(x))$ denotes $L'$ rescaled by $N$. This poses some strong restrictions on $L$. For instance, $N$ must be a multiple of the level of $L$ and the discriminant of $L$ must be equal to $N^{r/2}$, where $r$ is the rank of $L$.
	
	For example, using this argument it is easy to see that the theta functions corresponding to the root lattices $A_n$ ($n \geq 4$ even), $D_n$ ($n \geq 6$ even), and $E_6$ cannot be Fricke eigenforms (hence not Borcherds products). Combining this with Example~\ref{example root lattice A2} and Example~\ref{example root lattices}, we see that, among the ADE root lattices of even rank, only $A_2, D_4$, and $E_8$ have theta functions which are Borcherds products.

	\subsection{Other theta functions that are Borcherds products} 
	We have shown that every weight $1$ binary theta function, and certain weight $2$ and weight $4$ theta functions (coming from binary hermitian forms over $\Z[i]$ and $\Z[i,j,k]$) are Borcherds products. However, there are theta functions of weight two that are Borcherds products but cannot be explained by the methods of this paper. For example, consider the four-dimensional lattice with Gram matrix
	\[
	A = \begin{pmatrix}
	2 & -1 & -1 & -1 \\
	-1&  2 & 0  & 0 \\
	-1&  0 & 4 & -1 \\
	-1 & 0 & -1 & 4 
	\end{pmatrix}
	\]
	with discriminant $25$ and level $5$, such that $A \oplus A \cong A_4 \oplus A_4'(5)$. The corresponding theta function $\vartheta_A = 1+6q+18q^2+24q^3+42q^4 + \dots$ has weight $2$ for $\Gamma_0(5)$, and is indeed a Borcherds product, associated with the weight $1/2$ Kohnen plus space form $q^{-4} + 1 - 6q + 3q^4 + 10q^5 + 14q^9 - 60q^{16} + \dots$ for $\Gamma_0(20)$. However, there is no positive definite, integral binary hermitian form over $\Z[i]$ with discriminant $-5$, so this Borcherds product is not explained by Theorem~\ref{theorem product expansion hermitian intro}. One could also ask whether the weight 4 theta series $\vartheta_A^2 = \vartheta_{A_4} \cdot \vartheta_{A_4'(5)}$ can be constructed using a binary quaternionic form, but this is not the case: there is a single orbit of norm $-5$ vectors in the lattice $U+U(4)+3A_1$, and they correspond to the lattice in the genus of $A_4 \oplus A_4'(5)$ (of size 5) with $8$ roots and theta series $1 + 8q + 88q^2 + 256q^3 + 664q^4 + ...$ which has simple zeros in the CM points of discriminant 11. \\
	
	 Classifying the weight $2$ theta functions which are Borcherds products is an interesting open problem.

\bibliographystyle{plain}
\bibliofont
\bibliography{references}

\end{document}